\newenvironment{breakablealgorithm}
  {
   \begin{center}
     \refstepcounter{algorithm}
     \hrule height.8pt depth0pt \kern2pt
     \renewcommand{\caption}[2][\relax]{
       {\raggedright\textbf{\fname@algorithm~\thealgorithm} ##2\par}%
       \ifx\relax##1\relax 
         \addcontentsline{loa}{algorithm}{\protect\numberline{\thealgorithm}##2}%
       \else 
         \addcontentsline{loa}{algorithm}{\protect\numberline{\thealgorithm}##1}%
       \fi
       \kern2pt\hrule\kern2pt
     }
  }{
     \kern2pt\hrule\relax
   \end{center}
  }
\newtheorem{theorem}{Theorem}[section]
\newtheorem{proposition}{Proposition}[section]
\newtheorem{lemma}[theorem]{Lemma}
\theoremstyle{definition}
\newtheorem{definition}[theorem]{Definition}
\def\restrict#1{\raise-.5ex\hbox{\ensuremath|}_{#1}}
\theoremstyle{remark}
\newtheorem{remark}[theorem]{Remark}
\numberwithin{equation}{section}
\begin{document}

\title{Fractional Integration and Optimal Estimates for Elliptic Systems}


\author[F. Hernandez]{Felipe Hernandez}
\address[F. Hernandez]{Department of Mathematics,
Building 380, Stanford, California 94305}
\email{felipehb@stanford.edu}

\author[D. Spector]{Daniel Spector}
\address[D. Spector]{Department of Mathematics, National Taiwan Normal University, No. 88, Section 4, Tingzhou Road, Wenshan District, Taipei City, Taiwan 116, R.O.C.;\newline
Okinawa Institute of Science and Technology Graduate University,
Nonlinear Analysis Unit, 1919--1 Tancha, Onna-son, Kunigami-gun,
Okinawa, Japan
}
\email{spectda@protonmail.com}



\subjclass[2010]{Primary }

\date{}

\dedicatory{}

\commby{}

\begin{abstract}
In this paper we give an affirmative answer to the Euclidean analogue of a question of Bourgain and Brezis concerning the optimal Lorentz estimate for a Div-Curl system:  The function $Z=\operatorname*{curl} (-\Delta)^{-1} F$ satisfies
\begin{align*}
\operatorname*{curl} Z &= F   \\
 \operatorname*{div} Z &= 0
\end{align*}
and there exists a constant $C>0$ such that
\begin{align*}
\| Z\|_{L^{3/2,1}(\mathbb{R}^3;\mathbb{R}^3)} \leq  C\| F\|_{L^{1}(\mathbb{R}^3;\mathbb{R}^3)}.
\end{align*}
Our proof relies on a new endpoint Hardy-Littlewood-Sobolev inequality for divergence free measures which we obtain via a result of independent interest, an atomic decomposition of such objects.
\end{abstract}

\maketitle

\section{Introduction}
Let $F \in L^1(\mathbb{R}^3;\mathbb{R}^3)$ be a given divergence free vector field and consider the problem of finding estimates for $Z :\mathbb{R}^3 \to \mathbb{R}^3$ which satisfies
\begin{align}
\operatorname*{curl} Z &= F  \text{ in } \mathbb{R}^3, \label{curl}\\
 \operatorname*{div} Z &= 0 \text{ in } \mathbb{R}^3\label{div}.
\end{align}
That such a system has a solution is standard, indeed, one can compute that
\begin{align*}
Z=\operatorname*{curl} (-\Delta)^{-1} F
\end{align*}
satisfies \eqref{curl} and \eqref{div}.  When $F \in L^p(\mathbb{R}^3;\mathbb{R}^3)$ and $p>1$,
Calder\'on-Zygmund theory yields
\begin{align}\label{CZ}
\|\nabla Z\|_{L^p(\mathbb{R}^3;\mathbb{R}^{3\times 3})} \leq C\|F\|_{L^p(\mathbb{R}^3;\mathbb{R}^3)},
\end{align}
and therefore for any $1<p<3$ by Sobolev's embedding one has
\begin{align}\label{Sobolev}
\|Z\|_{L^{p^*}(\mathbb{R}^3;\mathbb{R}^{3})} \leq C\|F\|_{L^p(\mathbb{R}^3;\mathbb{R}^3)}.
\end{align}
Yet the inequality \eqref{CZ} is false for $p=1$ while \eqref{Sobolev} persists.

The proof of the validity of \eqref{Sobolev} in the case $p=1$ is due to J. Bourgain and H. Brezis, who in their pioneering papers \cite{BourgainBrezis2004, BourgainBrezis2007} establish such an estimate, along with its analogue for the torus (under the further compatibility condition $\int_{\mathbb{T}^d} F=0$),
\begin{align}\label{estimate}
\| Z\|_{L^{3/2}(\mathbb{T}^3;\mathbb{R}^3)} &\leq C \|F\|_{L^1(\mathbb{T}^3;\mathbb{R}^3)},
\end{align}
among other results.  It was an open question in \cite[Open Problem 1 on p.~295]{BourgainBrezis2007} whether one can improve this estimate on the Lorentz scale, that is, does the solution $Z=\operatorname*{curl} (-\Delta_{\mathbb{T}^3})^{-1} F$ admit the estimate
\begin{align*}
\| Z\|_{L^{3/2,1}(\mathbb{T}^3;\mathbb{R}^3)} &\leq C \|F\|_{L^1(\mathbb{T}^3;\mathbb{R}^3)}?
\end{align*}

In this paper we give an affirmative answer to the Euclidean analogue of this question as a result of a more general phenomena concerning fractional integration.  In particular, for $\alpha \in (0,d)$ we define the Riesz potential of order $\alpha$ acting on $f \in L^1(\mathbb{R}^d)$ by
\begin{align}\label{riesz_potentials}
I_\alpha f(x) := \frac{1}{\Gamma(\frac{\alpha}{2})} \int_0^\infty t^{\frac{\alpha}{2}-1} p_t\ast f(x)\;dt,
\end{align}
where $p_t(x) = 1/(4\pi t)^{d/2} \exp(-|x|^2/4t)$ is the heat kernel.  Note that $I_2=(-\Delta)^{-1}$  is the Newtonian potential when $d\geq 3$.  Then we first establish
\begin{theorem}\label{mainresult}
Let $d\geq 2$ and $\alpha \in (0,d)$.  There exists a constant $C=C(\alpha,d)>0$ such that
\begin{align}\label{potentialnodiracl1'}
\|I_\alpha F \|_{L^{d/(d-\alpha),1}(\mathbb{R}^d;\mathbb{R}^d)} \leq C \|F\|_{L^1(\mathbb{R}^d;\mathbb{R}^d)}
\end{align}
for all fields $F \in L^1(\mathbb{R}^d;\mathbb{R}^d)$ such that $\operatorname*{div} F=0$ in the sense of distributions.
\end{theorem}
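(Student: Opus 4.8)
The plan is to realize the inequality as a continuous superposition of a single model estimate, and then to establish that model estimate by hand. Since $\alpha\in(0,d)$ we have $d/(d-\alpha)>1$, so $L^{d/(d-\alpha),1}(\mathbb{R}^d;\mathbb{R}^d)$ is a Banach space and obeys Minkowski's integral inequality; this is what makes a superposition argument viable at the endpoint. I would invoke Smirnov's structure theorem for solenoidal vector measures: given $F\in L^1(\mathbb{R}^d;\mathbb{R}^d)$ with $\operatorname*{div}F=0$, the vector measure $F\,dx$ decomposes as $F\,dx=\int \mu_\Gamma\,d\mathbb{P}(\Gamma)$, a superposition over closed rectifiable curves $\Gamma$ of the elementary solenoids $\mu_\Gamma:=\dot\gamma\,d\mathcal{H}^1\restrict{\Gamma}$, together with the mass balance $\|F\|_{L^1}=\int|\Gamma|\,d\mathbb{P}(\Gamma)$. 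Commuting $I_\alpha$ with the integral and using Minkowski's inequality reduces Theorem \ref{mainresult} to the uniform \emph{loop estimate}
\[
\|I_\alpha\mu_\Gamma\|_{L^{d/(d-\alpha),1}(\mathbb{R}^d;\mathbb{R}^d)}\le C(d,\alpha)\,|\Gamma|
\]
for every closed rectifiable curve $\Gamma$. (This is where $d\ge2$ is used: in $\mathbb{R}^1$ there are no nontrivial solenoidal $L^1$ fields. When $d=2$ one could in fact bypass everything, since rotating a solenoidal field by a right angle produces an irrotational field of equal magnitude, reducing matters to the curl-free estimate \eqref{potentialnodiracl1}; so the content lies in $d\ge3$.)

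For the loop estimate I would normalize, by scaling and translation, to $|\Gamma|=1$ with $0\in\Gamma$, hence $\Gamma\subset\overline{B(0,1/2)}$, and split $I_\alpha\mu_\Gamma$ into its parts on $\{|x|\ge1\}$ and on $B(0,1)$. On $\{|x|\ge1\}$ the closedness of $\Gamma$ gives $\int_\Gamma\dot\gamma\,d\mathcal{H}^1=0$, and integrating by parts once along $\Gamma$ improves the decay from $|x|^{-(d-\alpha)}$ to $|x|^{-(d-\alpha+1)}$; a direct evaluation of the Lorentz norm of $|x|^{-(d-\alpha+1)}\chi_{\{|x|\ge1\}}$ then gives an $O(1)$ bound, because in the formula $\|g\|_{L^{d/(d-\alpha),1}}\approx\int_0^\infty|\{|g|>\lambda\}|^{(d-\alpha)/d}\,d\lambda$ the relevant exponent $(d-\alpha)/(d-\alpha+1)$ is $<1$ and the integral converges. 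The near part on $B(0,1)$ is the crux. One must not estimate $|I_\alpha\mu_\Gamma|$ by the scalar potential $I_\alpha(\mathcal{H}^1\restrict{\Gamma})$: a unit-length curve may be wound into an arbitrarily small ball $B(0,\delta)$, and on the annulus $\{\delta<|x|<1\}$ that majorant is $\approx|x|^{-(d-\alpha)}$, whose $L^{d/(d-\alpha),1}$ norm is $\approx\log(1/\delta)$, whereas $I_\alpha\mu_\Gamma$ itself stays bounded there thanks to the cancellation $\operatorname*{div}\mu_\Gamma=0$ (it behaves like the potential of a dipole). The remedy I envisage is a multiscale stopping-time decomposition of $\Gamma$: at each dyadic scale extract the sub-arcs of $\Gamma$ lying in balls of that size, close each into a loop by inserting a chord, and observe that the inserted chords cancel telescopically when $\Gamma$ is reassembled; each sub-loop then lies in a ball of known radius and known length, so a rescaled instance of the far-part estimate applies to it, its contribution being controlled by its own length. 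Summing over the stopping balls of a fixed scale costs at most the total length, and the sum over scales converges because the stopping structure keeps the total length of all the sub-loops comparable to $|\Gamma|$.

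Equivalently — and this is the connection to the circulation estimates in the Introduction — by $L^{d/(d-\alpha),1}$--$L^{d/\alpha,\infty}$ duality and self-adjointness of $I_\alpha$, the loop estimate is the same as the bound $\bigl|\int_\Gamma I_\alpha g\cdot\dot\gamma\,d\mathcal{H}^1\bigr|\le C|\Gamma|\,\|g\|_{L^{d/\alpha,\infty}(\mathbb{R}^d;\mathbb{R}^d)}$, a fractional and Lorentz-endpoint strengthening of \eqref{BBMprime}; one could alternatively prove it in this form by a Littlewood--Paley or wave-packet analysis along the lines of \cite{BourgainBrezisMironescu}, carried through to the weak-type endpoint. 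Either way, the main obstacle is precisely this loop estimate: passing from the known Lebesgue bound $\|I_\alpha\mu_\Gamma\|_{L^{d/(d-\alpha)}}\le C|\Gamma|$ to the sharp target $L^{d/(d-\alpha),1}$ with a constant uniform over all curves of a given length forces one to exploit the differential constraint $\operatorname*{div}\mu_\Gamma=0$ at every scale simultaneously, rather than merely the one-dimensionality of $\Gamma$ — and it is this multiscale use of solenoidality that is the new ingredient, lying beyond both the Hardy-space inequality \eqref{SW}, whose functions carry no concentration, and the curl-free inequality \eqref{potentialnodiracl1}, whose fields concentrate only in codimension one.
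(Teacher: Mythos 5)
Your reduction via Smirnov's decomposition and Minkowski's inequality to the uniform loop estimate $\|I_\alpha\mu_\Gamma\|_{L^{d/(d-\alpha),1}}\le C|\Gamma|$ is exactly the reduction the paper uses, and your far-field estimate (the dipole cancellation coming from $\int_\Gamma\dot\gamma\,d\mathcal{H}^1=0$, followed by a direct Lorentz-norm computation with the convergent exponent $(d-\alpha)/(d-\alpha+1)<1$) is correct. The gap is in the near field, which is where the entire difficulty of the theorem sits, and two things are asserted there without proof, neither of which is routine. First, the ``multiscale stopping-time decomposition'': if you extract sub-arcs at every dyadic scale, the total length of the sub-loops is $|\Gamma|$ \emph{per scale} and hence infinite; to obtain $\sum_j|\Gamma_j|\lesssim|\Gamma|$ you must assign each piece of the curve to a single scale, and then nothing guarantees that a sub-arc selected at scale $2^{-k}$ is well behaved at scales below $2^{-k}$ --- for instance the rule ``stop at the first exit from a ball of radius $2^{-k}$'' produces arcs that can be arbitrarily long and arbitrarily wound up inside that ball. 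Producing pieces that are uniformly nice at \emph{all} scales, with total length $(1+O(\epsilon))|\Gamma|$, is the content of the paper's surgery lemma (Lemma \ref{surgery-lem}), which needs two distinct cut types and careful bookkeeping of the corners that the cuts themselves create. Second, even granting such a decomposition, ``a rescaled instance of the far-part estimate'' does not control a sub-loop's potential inside its own ball: the far-part estimate uses only the vanishing of $\int d\mu_{\Gamma_j}$ and says nothing about the near field, so your recursion reproduces the original problem at each smaller scale rather than closing.

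What actually closes the near-field estimate in the paper is a pair of pointwise interpolation bounds for which you have no substitute. At low levels ($s<1$ in the layer-cake integral) the natural weak-type bound $|\{|I_\alpha\mu_\Gamma|>s\}|\lesssim(|\Gamma|/s)^{d/(d-\alpha)}$ is exactly non-integrable after raising to the power $(d-\alpha)/d$; the paper beats it by dominating $|I_\alpha\mu_\Gamma|$ by a product of two weak-$(1,1)$ maximal functions, one of $\mu_\Gamma$ and one of the total variation of a spanning integral current $S$ with $\|S\|(\mathbb{R}^d)^{1/2}\le c|\Gamma|$ (Federer's isoperimetric inequality), which yields a weak-$L^1$-type level-set bound. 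At high levels it uses $\mathcal{H}^1$--$BMO$ duality together with Adams's theorem that $\|I_{d-1}\mu\|_{BMO}$ is bounded under the ball-growth condition $\|\mu\|(B_r(x))\le Cr$ --- and it is precisely to secure that ball-growth condition that the surgery is performed. Your spatial near/far splitting is a reasonable alternative starting point, but as written the proposal identifies the obstacle rather than overcoming it: the quantitative decomposition and a genuine near-field mechanism (such as the isoperimetric/BMO pair above) are both missing.
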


We then show how Theorem \ref{mainresult} implies
\begin{theorem}\label{bbq}
Suppose $F \in L^1(\mathbb{R}^3;\mathbb{R}^3)$.  The function $Z=\operatorname*{curl} (-\Delta)^{-1} F$ satisfies
\begin{align*}
\operatorname*{curl} Z &= F   \\
 \operatorname*{div} Z &= 0
\end{align*}
in the sense of distributions and there exists a constant $C>0$ such that
\begin{align*}
\| Z\|_{L^{3/2,1}(\mathbb{R}^3;\mathbb{R}^3)} \leq  C\| F\|_{L^{1}(\mathbb{R}^3;\mathbb{R}^3)}.
\end{align*}
In particular, one has
\begin{align*}
\frac{Z(x)}{|x-y|} \in L^1(\mathbb{R}^3;\mathbb{R}^3)
\end{align*}
for every $y \in \mathbb{R}^3$.
\end{theorem}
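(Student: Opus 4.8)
The plan is to derive Theorem~\ref{bbq} from Theorem~\ref{mainresult} applied with $d=3$ and $\alpha=1$, combined with the $L^p$-boundedness of the Riesz transforms and H\"older's inequality in Lorentz spaces; throughout I identify $(-\Delta)^{-1}$ with a constant multiple of $I_2$, the Riesz potential of order two. I would begin with the algebraic identities. On Schwartz functions one has $I_2=I_1\circ I_1$, and both operators commute with differentiation, so $Z=\operatorname*{curl} I_2 F=\operatorname*{curl} I_1(I_1F)$ and hence componentwise $Z_i=\sum_{j,k}\varepsilon_{ijk}\,\partial_j I_1\big((I_1F)_k\big)$. Since $\partial_j I_1$ is, up to a multiplicative constant, the $j$-th Riesz transform $R_j$ (both have Fourier multiplier proportional to $\xi_j/|\xi|$), this reads $Z_i=\sum_{j,k}\varepsilon_{ijk}\,R_j\big((I_1F)_k\big)$. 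Because $\operatorname*{div}\operatorname*{curl}=0$, the identity $\operatorname*{curl} Z=F$ forces $\operatorname*{div} F=0$, which is in any case the relevant case (e.g.\ $F=J$ for Maxwell), so I assume it; then $\operatorname*{div}(I_1F)=I_1(\operatorname*{div} F)=0$, i.e.\ $I_1F$ is again divergence free. The two PDE identities are now immediate: $\operatorname*{div} Z=\operatorname*{div}\operatorname*{curl}(-\Delta)^{-1}F=0$, while $\operatorname*{curl}\operatorname*{curl}=\nabla\operatorname*{div}-\Delta$ gives $\operatorname*{curl} Z=F-\nabla\operatorname*{div}(-\Delta)^{-1}F=F$.

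For the estimate, which is where Theorem~\ref{mainresult} is used, I apply that theorem with $d=3$, $\alpha=1$ to the divergence free field $F$ to obtain
\[
\|I_1 F\|_{L^{3/2,1}(\mathbb{R}^3;\mathbb{R}^3)}\le C\|F\|_{L^1(\mathbb{R}^3;\mathbb{R}^3)}.
\]
Since $1<3/2<\infty$, each Riesz transform $R_j$ is a Calder\'on--Zygmund operator and is therefore bounded on the Lorentz space $L^{3/2,1}(\mathbb{R}^3)$ (by real interpolation between its $L^p$ bounds, or by the action of Calder\'on--Zygmund operators on rearrangement-invariant spaces). Summing the finitely many terms in the expression for $Z$,
\[
\|Z\|_{L^{3/2,1}(\mathbb{R}^3;\mathbb{R}^3)}\le C\sum_{i,j,k}\big\|R_j\big((I_1F)_k\big)\big\|_{L^{3/2,1}(\mathbb{R}^3)}\le C\|I_1 F\|_{L^{3/2,1}(\mathbb{R}^3;\mathbb{R}^3)}\le C\|F\|_{L^1(\mathbb{R}^3;\mathbb{R}^3)},
\]
which is the claimed bound.

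Finally, for the pointwise-integrability statement I fix $y\in\mathbb{R}^3$ and note that $h_y(x):=|x-y|^{-1}$ lies in $L^{3,\infty}(\mathbb{R}^3)$ with $\|h_y\|_{L^{3,\infty}}$ independent of $y$, since $|\{x:\ |x-y|^{-1}>\lambda\}|=c\lambda^{-3}$. H\"older's inequality in Lorentz spaces, with the admissible exponents $\tfrac11=\tfrac1{3/2}+\tfrac13$ and $\tfrac11=\tfrac11+\tfrac1\infty$, then yields
\[
\Big\|\tfrac{Z(\cdot)}{|\cdot-y|}\Big\|_{L^1(\mathbb{R}^3;\mathbb{R}^3)}\le C\,\|Z\|_{L^{3/2,1}(\mathbb{R}^3;\mathbb{R}^3)}\,\|h_y\|_{L^{3,\infty}(\mathbb{R}^3)}\le C\|F\|_{L^1(\mathbb{R}^3;\mathbb{R}^3)}<\infty .
\]

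The only substantial ingredient is Theorem~\ref{mainresult} itself; everything else is the ``simple estimates for singular integrals'' promised above. The one place that needs a little care is justifying the identities $I_2=I_1\circ I_1$ and $\partial_jI_1=c\,R_j$ for the merely $L^1$ field $F$ rather than for Schwartz functions: this is a routine argument on the Fourier side in $\mathcal{S}'(\mathbb{R}^3)$, using that in dimension three the multiplier $|\xi|^{-1}$ is locally integrable near the origin so that $I_1F$ is a well-defined tempered distribution, which by the previous paragraph in fact belongs to $L^{3/2,1}\subset L^{3/2}$, on which the Riesz transforms act in the usual way.
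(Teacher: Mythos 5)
Your proof is correct and follows essentially the same route as the paper: express $Z$ componentwise as Riesz transforms of $I_1F$, combine the Lorentz-space boundedness of the Riesz transforms with Theorem~\ref{mainresult} (with $d=3$, $\alpha=1$), and conclude the last claim by H\"older's inequality in Lorentz spaces against $|x-y|^{-1}\in L^{3,\infty}(\mathbb{R}^3)$. Your explicit remark that $\operatorname*{curl}Z=F$ forces $\operatorname*{div}F=0$ (needed to invoke Theorem~\ref{mainresult}) is a hypothesis the paper leaves implicit, and it is good that you flagged it.
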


\begin{remark}
A basic example of an application of Theorem \ref{bbq} is to Maxwell's equations in the static regime, where $Z=B$ is the magnetic field and $F=J$ is the electric current density.  Then the assumption $J$ has finite total mass is the natural one, as this says the total current is finite, in which case one deduces sharp Lorentz regularity of the magnetic field.
\end{remark}

Let us make a few remarks concerning the theorems and an application of Theorem \ref{mainresult} to another PDE before we return to discuss the proofs.  First, that one has the inequality
\begin{align}\label{alsotrueLorentz}
\|I_\alpha F \|_{L^{d/(d-\alpha),q}(\mathbb{R}^d;\mathbb{R}^d)} \leq C \|F\|_{L^1(\mathbb{R}^d;\mathbb{R}^d)}
\end{align}
for all $F \in L^1(\mathbb{R}^d;\mathbb{R}^d)$ such that $\operatorname*{div} F=0$ for any $q>1$ can be deduced from J. Van Schaftingen's result \cite{VS} (and this implies an analogous inequality in Theorem \ref{bbq}).  Second, the validity of Theorem \ref{mainresult} for $d=2$ already follows from work of the second named author in \cite{Spector1}, as in two dimensions the assumption that $\operatorname*{curl} F=0$ is equivalent to $\operatorname*{div} F=0$.  Third, the improvement of the Euclidean analogue of the differential inequality \eqref{estimate} to the optimal result on Lorentz scale is the first such improvement for differential operators which admit a singularity of dimension $<d-1$; for more historical details on this we refer to Section \ref{historical_remarks} below.
Fourth, concerning the precise question of Bourgain and Brezis, the case of the torus; from the perspective of our method this turns out to be slightly more subtle and will be treated in a forthcoming work.  Finally, concerning a refinement of the estimates for Poisson's equation that have been considered in the beginning of \cite{BourgainBrezis2007}, we prove
\begin{theorem}\label{bbq1}
Let $d \geq 3$ and suppose $F \in L^1(\mathbb{R}^d;\mathbb{R}^d)$ satisfies $\operatorname*{div}F=0$ in the sense of distributions.  The function $U=I_2 F$ satisfies
\begin{align*}
-\Delta U = F
\end{align*}
in the sense of distributions and there exist constants $C,C'>0$ such that
\begin{align*}
\| U\|_{L^{d/(d-2),1}(\mathbb{R}^d;\mathbb{R}^d)} &\leq  C\| F\|_{L^{1}(\mathbb{R}^d;\mathbb{R}^d)} \\
\| \nabla U\|_{L^{d/(d-1),1}(\mathbb{R}^d;\mathbb{R}^d)} &\leq  C'\| F\|_{L^{1}(\mathbb{R}^d;\mathbb{R}^d)}.
\end{align*}
\end{theorem}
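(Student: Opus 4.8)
The plan is to deduce both conclusions from Theorem~\ref{mainresult}, using besides it only the elementary $L^p$ and Lorentz-space mapping properties of the Riesz transforms; here and below $d\geq 3$, so that $d/(d-2)$ is a legitimate exponent. That $U=I_2F$ is a well-defined locally integrable function and that $-\Delta U=F$ in the sense of distributions is routine: since $I_2$ equals $1/\gamma(2)$ times the Newtonian kernel $|x|^{-(d-2)}$, for every $\phi\in C^\infty_c(\mathbb{R}^d;\mathbb{R}^d)$ one has $\langle U,-\Delta\phi\rangle=\langle F,I_2(-\Delta\phi)\rangle=\langle F,\phi\rangle$, the interchange of integrals being legitimate by Fubini because $\int_K I_2|F|\,dx=\int|F(y)|\,(I_2\mathbf{1}_K)(y)\,dy\leq\|I_2\mathbf{1}_K\|_{L^\infty}\|F\|_{L^1}<\infty$ for every compact $K$, and $I_2(-\Delta\phi)=-\Delta I_2\phi=\phi$ pointwise for Schwartz $\phi$. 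With this in hand, the first estimate is nothing more than Theorem~\ref{mainresult} applied with $\alpha=2$: since $\operatorname*{div}F=0$,
\[
\|U\|_{L^{d/(d-2),1}(\mathbb{R}^d;\mathbb{R}^d)}=\|I_2F\|_{L^{d/(d-2),1}(\mathbb{R}^d;\mathbb{R}^d)}\leq C\|F\|_{L^1(\mathbb{R}^d;\mathbb{R}^d)}.
\]

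For the gradient bound the idea is to factor $I_2=I_1\circ I_1$ and let one copy of $I_1$ absorb the derivative. With the convention $R=\nabla I_1$ of the introduction one has, at the level of Fourier multipliers, $\partial_j I_2=\partial_j I_1\circ I_1=R_j\circ I_1$, so that componentwise $\partial_j(I_2F_k)=R_j(I_1F_k)$; this distributional identity is verified by pairing against a Schwartz function and using that $I_1F$ is a genuine locally integrable function. Now apply Theorem~\ref{mainresult} with $\alpha=1$ (again using $\operatorname*{div}F=0$) to obtain $G:=I_1F\in L^{d/(d-1),1}(\mathbb{R}^d;\mathbb{R}^d)$ with $\|G\|_{L^{d/(d-1),1}}\leq C\|F\|_{L^1}$. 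Since $d/(d-1)\in(1,\infty)$ and each $R_j$ is a Calder\'on--Zygmund operator---bounded on $L^p(\mathbb{R}^d)$ for all $p\in(1,\infty)$, hence by real interpolation on $L^{d/(d-1),1}(\mathbb{R}^d)$---we conclude
\[
\|\nabla U\|_{L^{d/(d-1),1}(\mathbb{R}^d;\mathbb{R}^{d\times d})}=\|(R_jG_k)_{j,k}\|_{L^{d/(d-1),1}}\leq C\|G\|_{L^{d/(d-1),1}(\mathbb{R}^d;\mathbb{R}^d)}\leq C'\|F\|_{L^1(\mathbb{R}^d;\mathbb{R}^d)}.
\]

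The essential input is Theorem~\ref{mainresult}; everything else is soft, and there is no step comparable in difficulty to its proof. The only points that call for a little care are the two distributional identities $-\Delta I_2F=F$ and $\nabla I_2F=R(I_1F)$ for data merely in $L^1$---each a short Fubini or density argument---and the passage from $L^p$-boundedness of the Riesz transforms to their boundedness on $L^{d/(d-1),1}$, which is classical (it follows, for instance, from the Calder\'on--Mityagin interpolation theorem for the real method, or simply from the fact that Calder\'on--Zygmund operators act boundedly on every Lorentz space whose first exponent lies in $(1,\infty)$).
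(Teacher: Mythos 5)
Your proposal is correct and follows essentially the same route as the paper: the first bound is Theorem~\ref{mainresult} with $\alpha=2$, and the gradient bound comes from writing $\nabla U=R(I_1F)$ and combining Theorem~\ref{mainresult} with $\alpha=1$ with the Lorentz-space boundedness of the Riesz transforms obtained by interpolation. The extra care you take with the distributional identities and the explicit restriction to $d\geq 3$ are fine additions but do not change the argument.
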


We conclude the introduction with a discussion of the idea of the proofs.  From the results of the second named author  \cite{Spector1, KrantzPelosoSpector, Spector2}, one finds that in the curl free case to establish the estimate \eqref{potentialnodiracl1'} there are two basic steps:
\begin{enumerate}
\item  Reduce the general inequality to the case of a fundamental object.
\item  Prove the inequality for the fundamental object.
\end{enumerate}
The fundamental objects in that setting are Radon measures $D\chi_E$ where $\chi_E$ is the characteristic function of a set and $D\chi_E$ denotes its distributional derivative; that estimates for such objects suffices is a consequence of the coarea formula.  The proof of the inequality for these objects follows from a single pointwise estimate:  For $\alpha \in (0,1)$, one has the existence of a constant $C=C(\alpha,d)>0$ such that
\begin{align}\label{interpolation}
|I_\alpha D\chi_E | \leq C \left(\sup_{t>0} |p_t \ast D\chi_E |\right)^{1-\alpha} \left(\sup_{t>0}  t^{1/2}  |\nabla p_t \ast \chi_E| \right)^\alpha
\end{align}
for all $\chi_E \in BV(\mathbb{R}^d)$, where $ p_t$ denotes the heat kernel and $\nabla p_t$ its classical gradient.  In particular, from this one easily obtains
\begin{align}
|I_\alpha D\chi_E| &\leq C' \left(\sup_{t>0} |p_t \ast D \chi_E|\right)^{1-\alpha}  \label{local}\\
|I_\alpha D\chi_E| &\leq C \left(\sup_{t>0} |p_t \ast D \chi_E|+\sup_{t>0} |t^{1/2}\nabla p_t \ast \chi_E|\right). \label{global}
\end{align}
The former estimate follows from \eqref{interpolation} by the boundedness of the maximal function involving the derivative of the heat kernel on $L^\infty(\mathbb{R}^d)$ and the fact that $\chi_E$ is bounded; it is above the natural scaling exponent.  The latter estimate follows from Young's inequality; it is below the scaling exponent.  That these are sufficient to obtain an estimate for $I_\alpha D\chi_E$ in the optimal Lorentz space follows from an argument which interpolates these two non-linear estimates appropriately.

An attempt to apply this program in the divergence free case leads to the questions
\begin{enumerate}
\item  What are the fundamental objects?
\item  What inequalities can be established for any claimed fundamental object?
\end{enumerate}
A natural beginning here is a result of S. Smirnov \cite{Smirnov}, which yields a representation of divergence free function in terms of curves.  In particular, one can show that Smirnov's \cite[Theorem A]{Smirnov} implies the following result (which is a variant of an assertion of Bourgain and Brezis \cite[p.~278]{BourgainBrezis2007}):  Given $F \in L^1(\mathbb{R}^d;\mathbb{R}^d)$ such that $\operatorname*{div}F=0$, there exist oriented $C^1$ closed curves $\left\{\Gamma_{i,l}\right\}_{\{1,\ldots, n_l\} \times \mathbb{N}}$ such that
\begin{align}\label{weak-star-convergence}
\int_{\mathbb{R}^d}  \Phi \cdot dF = \lim_{l \to \infty} \frac{\|F\|_{L^1(\mathbb{R}^d;\mathbb{R}^d)}}{n_l \cdot l} \sum_{i=1}^{n_l}  \int_{\mathbb{R}^d} \Phi \cdot \mu_{ \Gamma_{i,l}}
\end{align}
for any $\Phi \in C_0(\mathbb{R}^d;\mathbb{R}^d)$ and
\begin{align}\label{strict-convergence}
\lim_{l \to \infty} \frac{1}{n_l \cdot l} \sum_{i=1}^{n_l}  ||\mu_{\Gamma_{i,l}}||(\mathbb{R}^d)= 1.
\end{align}
Here and in the sequel we use the notation $\mu_{ \Gamma}$ to denote the vector measure induced by integration along an oriented piecewise $C^1$ curve $\Gamma$, i.e.
\begin{align}\label{measure_definition}
\int_{\mathbb{R}^d} \Phi \cdot \mu_{ \Gamma} := \int_0^{L}\Phi(\gamma(t)) \cdot \dot{\gamma}(t)\;dt
\end{align}
for all $\Phi \in C_0(\mathbb{R}^d;\mathbb{R}^d)$, where $\gamma \in C^1([0,L];\mathbb{R}^d)$ is the parametrization by arclength of the oriented curve $\Gamma$, while $||\mu_{\Gamma}||$ denotes the total variation measure of $\mu_{\Gamma}$.

For such objects, we can argue analogously (see Lemma \ref{pointwise_global} below in Section \ref{lemmas})
 to the curl free case to obtain a counterpart of \eqref{interpolation} for oriented piecewise $C^1$ closed curves, the inequality
\begin{align}\label{global_analogue}
|I_\alpha \mu_\Gamma | \leq C_1 \left(\sup_{t>0} |p_t \ast \mu_\Gamma |\right)^{1-\alpha} \left(\sup_{t>0}  t^{1/2}  |\nabla p_t| \ast ||S|| \right)^\alpha,
\end{align}
where $S \in \mathbb{I}_2(\mathbb{R}^d)$ is an integral current, the generalized minimal surface which satisfies
\begin{align*}
&\partial S = \mu_\Gamma \\
&||S||(\mathbb{R}^d)^{1/2}  \leq c  ||\mu_{\Gamma}||(\mathbb{R}^d)
\end{align*}
and $||S|| \in M_b(\mathbb{R}^d)$ denotes the total variation measure of $S$.

While this estimate yields an adequate replacement for \eqref{global}, the term
\begin{align*}
\sup_{t>0}  t^{1/2}  |\nabla p_t| \ast ||S||
\end{align*}
is never bounded in the case we are considering, $d \geq 3$.  We therefore require an alternative replacement to the inequality \eqref{local}.  One possibility which is in a similar spirit is
\begin{align}\label{local_analogue}
|I_\alpha \mu_\Gamma | \leq C_2 \left(\sup_{t>0} |p_t \ast \mu_\Gamma |\right)^{1-\frac{\alpha}{d-1}}
\|I_{d-1}\mu_\Gamma\|_{BMO} ^\frac{\alpha}{d-1}.
\end{align}
Indeed, if one had the bound
\begin{align}\label{Adamsestimate}
\|I_{d-1}\mu_\Gamma\|_{BMO} \leq C_3
\end{align}
for the measures $\mu_\Gamma$ associated to the curves which arise in \eqref{weak-star-convergence} with some universal constant $C_3>0$, then one would be able to complete the estimate for such curves by an interpolation argument analogous to that in the curl free case.  If $d=2$, the boundedness of the Riesz transforms from $L^\infty$ into $BMO$ implies \eqref{Adamsestimate} as follows:  If $E$ is the set of finite measure which $\Gamma$ encloses then
\begin{align*}
\|I_{1}\mu_\Gamma\|_{BMO} = \|I_1 \nabla^\perp \chi_E\|_{BMO} = \|R \chi_E\|_{BMO}  \leq C\|\chi_E\|_{L^\infty}.
\end{align*}
However, when $d\geq 3$ the inequality \eqref{Adamsestimate} does not necessarily hold for the curves produced by Smirnov's approximation without further qualification.  We therefore need to work a little more.  The idea is that perhaps not every curve from the approximation \eqref{weak-star-convergence} satisfies this, but that it is possible to write any curve as the sum of curves that do.  Here we rely on a result of D.R. Adams \cite{Adams:1975} which shows that the $BMO$ estimate \eqref{Adamsestimate} holds when $\mu_\Gamma$ satisfies the ball growth condition
\begin{align}\label{ballgrowth}
 \sup_{x\in \mathbb{R}^d,r>0} \frac{||\mu_\Gamma||(B_r(x))}{r} \leq C_4
\end{align}
for some $C_4>0$ (and then $C_3$ depends on $C_4$).

Therefore, taking for granted the estimate \eqref{local_analogue}, it suffices for our purposes to show that given an oriented $C^1$ closed curve $\Gamma$ one can decompose it into the sum of oriented piecewise $C^1$ closed curves (in the sense that its associated measure is the sum of associated measures of these curves), each of whose associated measures satisfies \eqref{ballgrowth} with a universal constant, and such that the total length increases by a bounded factor.  We prove such a decomposition in Lemma \ref{surgery-lem} below in Section \ref{surgery}, which along with the proof of \eqref{local_analogue} in Lemma \ref{interpolation1} below in Section \ref{lemmas}, finally yields an answer to our two questions:  The fundamental objects for divergence free measures are oriented piecewise $C^1$ closed curves which satisfy \eqref{ballgrowth} with some universal constant; such fundamental objects admit the estimates \eqref{global_analogue} and \eqref{local_analogue}, which by an interpolation argument analogous to the curl free case yields the desired sharp Lorentz inequality.

That oriented piecewise $C^1$ closed curves which satisfy a ball growth condition are fundamental objects for the space of divergence free measures is itself a result of independent interest that we record as
\begin{theorem}\label{approximation}
Suppose $F \in M_b(\mathbb{R}^d;\mathbb{R}^d)$ satisfies $\operatorname*{div}F=0$ in the sense of distributions.  Then there exist oriented piecewise-$C^1$ closed curves $\{\Gamma_{i,l,j}\}_{\{1,\ldots,n_l\}\times \mathbb{N}\times \{1,\ldots,k_{i}\}}$ for which
\begin{align*}
F= \lim_{l \to \infty} \frac{||F||(\mathbb{R}^d)}{n_l \cdot l} \sum_{i=1}^{n_l}  \sum_{j=1}^{k_{i}}  \mu_{\Gamma_{i,l,j}}\end{align*}
weakly-star as measures,
\begin{align*}
\lim_{l \to \infty} \frac{1}{n_l \cdot l} \sum_{i=1}^{n_l} \sum_{j=1}^{k_i} ||\mu_{\Gamma_{i,l,j}}||(\mathbb{R}^d)
\leq 10,
\end{align*}
and
\begin{align*}
\|\mu_{\Gamma_{i,l,j}}\|_{\mathcal{M}^1(\mathbb{R}^d)}:= \sup_{x\in \mathbb{R}^d,r>0} \frac{||\mu_{\Gamma_{i,l,j}}||(B(x,r))}{r} \leq 1000.
\end{align*}
\end{theorem}
\noindent
Finally we can conclude the discussion of the proofs:  The deduction of Theorem \ref{mainresult} follows easily from Theorem \ref{approximation} and Lemmas \ref{pointwise_global} and \ref{interpolation1}; Theorems \ref{bbq} and \ref{bbq1} in turn follow from Theorem \ref{mainresult} and the boundedness of singular integrals on Lorentz spaces.

The plan of the paper is as follows.  In Section \ref{historical_remarks} we review the literature on estimates for Riesz potentials and Sobolev inequalities in the $L^1$ regime.  In Section \ref{preliminaries} we recall some necessary preliminaries.  In Section \ref{lemmas}, we prove in Lemmas \ref{pointwise_global} and \ref{interpolation1} several potential estimates for vector measures associated to curves, which we will utilize in the proof of Theorem \ref{mainresult}.  In Section \ref{surgery}, we first give a precise statement of Lemma \ref{surgery-lem} - the surgery lemma.  This lemma makes precise the earlier discussion that one can take an oriented $C^1$ closed curve and produce a family of oriented piecewise $C^1$ closed curves decomposing this curve, each of which satisfies a ball growth constant and such that the total length of the curves obtained is at most a bounded factor times the initial length.  We then state and prove several lemmas which provide ball growth conditions for the measures defined in \eqref{measure_definition} under various additional assumptions on the curves.  With these lemmas we then prove the surgery lemma, after which we deduce the atomic decomposition of divergence free measures asserted in Theorem  \ref{approximation}.  Finally, in Section \ref{mainresults} we prove Theorems \ref{mainresult}, \ref{bbq}, and \ref{bbq1}.

\section{Historical Remarks}\label{historical_remarks}
The initiation of the study of the mapping properties of fractional integrals in the spirit\footnote{Hardy and Littlewood's results concern one-sided Riemann-Liouville fractional integrals, with the comment that ``the differences between integrals, whether right or left-handed, with different origins, though not of importance for our purposes here, are not entirely trivial"  \cite[p.~567]{HardyLittlewood}.} of \eqref{riesz_potentials} is due to G.~H.~Hardy and J.~E.~Littlewood \cite{HardyLittlewood}, while for these potentials it was F.~Riesz \cite{Riesz} and S.~Sobolev \cite{sobolev} who obtained the first estimates on the spaces $L^p(\mathbb{R}^d)$: If $\alpha \in (0,d)$ and $1<p<d/\alpha$, then there exists a constant $C>0$ such that
\begin{align*}
\|I_\alpha f \|_{L^{dp/(d-\alpha p)}(\mathbb{R}^d)} \leq C \|f\|_{L^p(\mathbb{R}^d)}
\end{align*}
for all $f \in L^p(\mathbb{R}^d)$.  In the case $p=1$, such an inequality is classically known to fail, cf. \cite[p.~119]{Stein}.  Indeed, if true, by approximation in the strict topology it would necessarily hold for all finite Radon measures, and therefore for a Dirac mass.  However, as this measure is the identity for convolution, this would imply that $I_\alpha \in L^{d/(d-\alpha)}(\mathbb{R}^d)$, and the contradiction becomes evident from the alternative formula for the Riesz kernel
\begin{align*}
I_\alpha(x) \equiv \frac{1}{\gamma(\alpha)} \frac{1}{|x|^{d-\alpha}},
\end{align*}
see, e.g. \cite[Theorem 5 on p.~73]{Stein}.  That the Dirac mass is the only obstruction to such an inequality is an idea which subsequently emerged in the works \cite{BourgainBrezisMironescu,BourgainBrezis2004, BourgainBrezis2007,SSVS,Spector,Spector1,VS,VS2,VS2a,VS3,VS4}, and which we further explore in this paper.

This principle concerning the Dirac mass is manifest, though certainly not explicit, in an early replacement of the $L^1(\mathbb{R}^d)$ inequality, a Hardy space estimate of E. Stein and G. Weiss in \cite{SteinWeiss}:  For any $\alpha \in (0,d)$ there exists a constant $C>0$ such that
\begin{align}\label{SW}
\|I_\alpha f \|_{L^{d/(d-\alpha )}(\mathbb{R}^d)} \leq C \|f\|_{\mathcal{H}^1(\mathbb{R}^d)}
\end{align}
for all $f \in \mathcal{H}^1(\mathbb{R}^d)$, where
\begin{align*}
\|f\|_{\mathcal{H}^1(\mathbb{R}^d)}:= \|f\|_{L^1(\mathbb{R}^d)}+ \|Rf\|_{L^1(\mathbb{R}^d;\mathbb{R}^d)}
\end{align*}
and $Rf:=\nabla I_1f$ is the vector Riesz transform.  From the perspective of ruling out singularities concentrated at a point as the limit of norm bounded sequences, this is a rather strong imposition on the class of functions, as not only does the Hardy space not contain Dirac masses, it does not contain measures concentrated on any lower dimension.  Indeed, one of the results of Stein and Weiss in \cite{SteinWeiss} asserts that if one has a Radon measure whose Riesz transforms are Radon measures, then in fact the Radon measure is absolutely continuous with respect to the Lebesgue measure.  In particular, the result of Stein and Weiss shows that a space which contains no lower dimensional concentrations admits a fractional integration inequality.

Thus one may wonder whether it is possible to find other subspaces of $L^1(\mathbb{R}^d)$ whose closure in the strict topology does not contain measures supported at a point, though which possibly contains measures supported on sets of various dimensions $<d$, which admit such an inequality.  Without further qualification this seems hopeless, though if we view the problem from a slightly different perspective some structure emerges.  In particular, in the spirit of Stein and Weiss's approach we may identify functions in the Hardy space as elements in the space $L^1(\mathbb{R}^d;\mathbb{R}^{d+1})$, and not only that, to view the Hardy space itself as a closed subspace of this vector-valued $L^1$ space.  This, in turn, allows us to view their embedding as a statement about an embedding for a closed subspace of $L^1(\mathbb{R}^d;\mathbb{R}^{d+1})$.  This raises the question of which spaces arising as the closure of subspaces of $L^1(\mathbb{R}^d;\mathbb{R}^{k})$ in the strict topology support an inequality in the spirit of \eqref{SW}.  A full answer to this question is lacking at present (see \cite[Definition 1.1 and Conjecture 1]{ASW} for a conjecture in this direction), though for differentially constrained subspaces one has an elegant characterization due to Jean Van Schaftingen.  Precisely, in \cite[Definition 1.3 on p.~881]{VS3}, Van Schaftingen  introduced the notion of a cocancelling operator:  A homogeneous linear differential operator $L(D):C^\infty_c(\mathbb{R}^d;\mathbb{R}^k) \to C^\infty_c(\mathbb{R}^d;\mathbb{R}^l)$ is cocancelling if
\begin{align*}
\bigcap_{\xi\in\mathbb{R}^d \setminus \{0\}}\ker L(\xi)=\{0\}.
\end{align*}
He then proved in Proposition 8.7 in \cite{VS3} a result which implies that one can have the inequality
\begin{align}\label{false1}
\|I_\alpha F \|_{L^{d/(d-\alpha)}(\mathbb{R}^d;\mathbb{R}^k)} \leq C \|F\|_{L^1(\mathbb{R}^d;\mathbb{R}^k)}
\end{align}
for all $F \in L^1(\mathbb{R}^d;\mathbb{R}^k)$ such that $L(D)F=0$ if and only if $L$ is cocancelling.   The connection of the cocancelling condition with the Dirac mass counterexample is that the former restricts the class of $F$ in the inequality to a subspace of $L^1(\mathbb{R}^d;\mathbb{R}^k)$ for which the strict limit of any norm bounded sequence does not concentrate on any zero dimensional subset, a fact which is evident when one examines the alternative description given by Proposition 2.1 in \cite{VS3}, that one has the equality
 \begin{align*}
		\bigcap_{\xi\in\mathbb{R}^d \setminus \{0\}}\ker L(\xi)=\left\{e\in \mathbb{R}^d\;\colon\; L(\delta_0 e)=0\right\}.
\end{align*}
In short, the imposition of a differential constraint which rules out concentrations at a point, the classical counterexample to such an inequality, is necessary and sufficient for the validity of the inequality.

Two typical examples of cocancelling operators are $L(D) = \operatorname*{curl}$ and $L(D) = \operatorname*{div}$.  For the former one has that such vector functions are curl-free and therefore gradients.  Thus, as a consequence of the coarea formula one deduces that they admit concentrations on sets no smaller than dimension $(d-1)$.  For the latter, a result of M. Roginskaya and M. Wojciechowski \cite{RW} shows that limits of bounded sequences of this subspace may not concentration on sets of dimension smaller than $1$.  In either case, one has $k=d$ and the inequality \eqref{false1} reads
\begin{align}\label{false}
\|I_\alpha F \|_{L^{d/(d-\alpha)}(\mathbb{R}^d;\mathbb{R}^d)} \leq C \|F\|_{L^1(\mathbb{R}^d;\mathbb{R}^d)}
\end{align}
for all $F \in L^1(\mathbb{R}^d;\mathbb{R}^d)$ such that $\operatorname*{curl}F=0$ or $\operatorname*{div}F=0$.  The case $\operatorname*{curl}F=0$ is actually connected to the inequality of Stein and Weiss, as was observed in the paper of the second named author \cite[Inequalities (1.1) and (1.3) on p.~2]{Spector1}, where it was shown that the inequality \eqref{false} is equivalent to the estimate
\begin{align}\label{SSVS_inequality}
\|I_\alpha f \|_{L^{d/(d-\alpha )}(\mathbb{R}^d)} \leq C \|Rf\|_{L^1(\mathbb{R}^d;\mathbb{R}^d)}
\end{align}
for all $f \in C^\infty_c(\mathbb{R}^d)$ (see also \cite{SSVS}, where the inequality \eqref{SSVS_inequality} was first observed).  In particular, \eqref{SSVS_inequality} is a version of \eqref{SW} with the $L^1(\mathbb{R}^d)$ norm removed, hence a stronger inequality.

The discovery of the admissibility of \eqref{false1} under further conditions has its origins in a paper of J. Bourgain, H. Brezis, and P. Mironescu \cite{BourgainBrezisMironescu}, where using Littlewood-Paley theory they proved the following inequality:  There exists a constant $C_d>0$ such that
\begin{align}
\left| \int_{\mathbb{R}^d} Y \cdot d\mu_{\Gamma} \right| \leq C_d ||\mu_{\Gamma}||(\mathbb{R}^d) \|\nabla Y\|_{L^{d}(\mathbb{R}^d;\mathbb{R}^d)} \label{BBM}
\end{align}
for every closed rectifiable curve $\Gamma \subset \mathbb{R}^d$ and every $Y \in \dot{W}^{1,d}(\mathbb{R}^d)$.  By the definition of the Riesz potential and using the boundedness of the Riesz transforms on $L^d(\mathbb{R}^d)$, this is equivalent to the integral estimate
\begin{align}
\left| \int_{\mathbb{R}^d} I_1Y \cdot d\mu_{\Gamma} \right| \leq C'_d ||\mu_{\Gamma}||(\mathbb{R}^d) \| Y\|_{L^{d}(\mathbb{R}^d;\mathbb{R}^d)} \label{BBMprime}
\end{align}
for every $Y \in L^d(\mathbb{R}^d;\mathbb{R}^d)$.  When one takes into an account a result of S. Smirnov \cite{Smirnov}, which can be shown to imply that linear combinations of such circulation integrals are dense in the space of solenoidal fields in the strict topology, one finds a further equivalent formulation in the inequality
\begin{align}\label{true}
\|I_1 F \|_{L^{d/(d-1)}(\mathbb{R}^d;\mathbb{R}^d)} \leq C \|F\|_{L^1(\mathbb{R}^d;\mathbb{R}^d)}
\end{align}
for all $F \in L^1(\mathbb{R}^d;\mathbb{R}^d)$ such that $\operatorname*{div} F=0$ in the sense of distributions.  This is precisely \eqref{false} under the constraint that $\operatorname*{div} F=0$ in the case $\alpha=1$, while the full generality of \eqref{false1} for $\alpha \in (0,d)$ can be argued\footnote{Proposition 8.7 implies the case $\alpha \in (0,1)$ which in combination with Sobolev's inequality yields the full range.} by a result of J. Van Schaftingen \cite{VS3}.

A central question in the theory of these embeddings that has not been resolved is that of optimal estimates on the Lorentz scale.  This question is motivated by what is known in the classical theory and has been raised in various forms for differential analogues of the preceding inequalities as an open problem in a number of papers \citelist{ \cite{BourgainBrezis2007}*{Open problem 1} \cite{VS3}*{Open problem 8.3}\cite{VS4}*{Open problem 2}
\cite{VS2a}*{Open problem 2}\cite{Spector-VanSchaftingen-2018}*{Questions 1.1 and 1.2}}.  Here we recall that around the same time as Stein and Weiss's result concerning an integral replacement of the failure of Sobolev's inequality for $L^1(\mathbb{R}^d)$, E. Gagliardo \cite{Gagliardo} and L. Nirenberg \cite{Nirenberg} proved an alternative replacement in the form of the differential inequality
\begin{align}\label{GN}
\|u\|_{L^{d/(d-1)}(\mathbb{R}^d)} \leq C\|\nabla u \|_{L^1(\mathbb{R}^d;\mathbb{R}^d)}
\end{align}
for all $u \in W^{1,1}(\mathbb{R}^d)$.   This was then refined on the Lorentz scale by A. Alvino in \cite{alvino}, where it was proved that one has the inequality
\begin{align}\label{A_ineq}
\|u\|_{L^{d/(d-1),1}(\mathbb{R}^d)} \leq C\|\nabla u \|_{L^1(\mathbb{R}^d;\mathbb{R}^d)}
\end{align}
for all $u \in W^{1,1}(\mathbb{R}^d)$.  Here the inclusion
\begin{align*}
L^{d/(d-1),1}(\mathbb{R}^d) \subset L^{d/(d-1),d/(d-1)}(\mathbb{R}^d) = L^{d/(d-1)}(\mathbb{R}^d) \subset L^{d/(d-1),\infty}(\mathbb{R}^d)
\end{align*}
implies that \eqref{GN} is sharper than \eqref{A_ineq}, while simple examples show that Alvino's result is optimal.  However, the gradient is not the only operator for which one has such a Lorentz space improvement.   Indeed, one also has such an improvement for the symmetric part of the gradient in the vector-valued setting, along with a class of operators in this spirit.  We here refer to firstly the classical work of  M.~J.~Strauss \cite{Strauss}, where it was proved that one has the Korn-Sobolev inequality
\begin{align*}
\|u\|_{L^{d/(d-1)}(\mathbb{R}^d;\mathbb{R}^d)} \leq C\|\nabla u + (\nabla u)^t \|_{L^1(\mathbb{R}^d;\mathbb{R}^{d\times d})}
\end{align*}
for all $u \in L^1(\mathbb{R}^d;\mathbb{R}^d)$ such that $\nabla u + (\nabla u)^t \in L^1(\mathbb{R}^d;\mathbb{R}^{d\times d})$, and secondly to its Lorentz space refinement  \cite{Spector-VanSchaftingen-2018}, where it was proved that one has the inequality
\begin{align*}
\|u\|_{L^{d/(d-1),1}(\mathbb{R}^d;\mathbb{R}^d)} \leq C\|\nabla u + (\nabla u)^t \|_{L^1(\mathbb{R}^d;\mathbb{R}^{d\times d})}
\end{align*}
for all $u \in L^1(\mathbb{R}^d;\mathbb{R}^d)$ such that $\nabla u + (\nabla u)^t \in L^1(\mathbb{R}^d;\mathbb{R}^{d\times d})$.  As noted, the latter paper also contains analogous inequalities for a more general class of operators, though like the gradient and the symmetric part of the gradient, all of the differential operators which it treats, and therefore all that were known to admit the optimal Lorentz inequality before this paper, are those for which bounded sequences admit concentrations on sets of dimension no smaller than $(d-1)$.

A simple example of an inequality valid on the Lebesgue scale whose differential operator admits concentration on sets strictly smaller than $(d-1)$ is due to J. Bourgain and H. Brezis in \cite{BourgainBrezis2007}, that one has
\begin{align}\label{bb_eq}
\|Z\|_{L^{3/2}(\mathbb{R}^3;\mathbb{R}^3)} \leq C \|\operatorname*{curl}Z\|_{L^{1}(\mathbb{R}^3;\mathbb{R}^3)} \end{align}
for all $Z \in L^{1}(\mathbb{R}^3;\mathbb{R}^3)$ such that $\operatorname*{div}Z=0$. In particular, the fact that $\operatorname*{curl}Z$ is divergence free implies, by the result of Roginskaya and Wojciechowski \cite[Theorem 3 on p.~218]{RW}, that any measure obtained as the weak-star limit of a bounded sequence of such functions may concentrate on sets of dimension $1$, but not smaller.  Moreover, the limiting measure $\operatorname*{curl}Z = \mu_{\Gamma}$ for an oriented smooth closed curve $\Gamma$ shows that this result is optimal.  Such an inequality with one dimensional concentrations is in a sense the strongest possible result among the class of operators\footnote{For the differential inequalities we speak of, the notion is not cocancelling, but the dual notion of cancelling operators introduced by Van Schaftingen in \cite[Definition 1.2 on p.~880]{VS3}.} introduced by Van Schaftingen, since the inequality fails if one allows for zero dimensional singularities, and there is no more room between one and zero.

Concerning results for integral inequalities, whether the optimal Lorentz inequality holds for all cocancelling operators is not known (and an answer to this would resolve all of the open problems mentioned above).  In fact, prior to this paper there were only two optimal Lorentz embeddings known for the Riesz potentials.  The first is that one can improve the result of Stein and Weiss to the optimal estimate on the Lorentz scale, a result which has been argued by J. Dorronsoro \cite[p.~1032]{Dorronsoro}.  The second is the curl free case of the inequality \eqref{false} \cite[Theorem 1.1]{Spector1}, that one has
\begin{align}\label{potentialnodiracl1}
\|I_\alpha F \|_{L^{d/(d-\alpha),1}(\mathbb{R}^d;\mathbb{R}^d)} \leq C \|F\|_{L^1(\mathbb{R}^d;\mathbb{R}^d)}
\end{align}
for all $F \in L^1(\mathbb{R}^d;\mathbb{R}^d)$ such that $\operatorname*{curl}F=0$.  These results are of comparable strength to the known results for differential inequalities in that the Hardy space estimate does not admit any concentrations, while the curl-free case admits concentrations on $(d-1)$-dimensional sets.  In parallel to the question for their differential analogues, a natural question is whether one can obtain such optimal estimates for all cocancelling operators, or even for a single cocancelling operator which admits singularities of $F$ of dimension $<d-1$.  The question of whether $F \in L^1(\mathbb{R}^d;\mathbb{R}^d)$ such that $\operatorname*{div} F=0$ satisfies the optimal Lorentz inequality \eqref{potentialnodiracl1} was raised by Haim Brezis at the conclusion of a lecture of the second named author at Rutgers in January, 2019, and was the impetus for this paper.

\section{Preliminaries}\label{preliminaries}
In this paper we begin with $F \in L^1(\mathbb{R}^d;\mathbb{R}^d)$ such that $\operatorname*{div}F=0$, though the result holds more generally for vector-valued Radon measures $F \in M_b(\mathbb{R}^d;\mathbb{R}^d)$ such that $\operatorname*{div}F=0$, which is also more useful to treat for pedagogical reasons.  Thus, we will here consider vector-valued Radon measures $F$ which are solenoidal (have zero divergence).  It will be useful to view these objects and some closely related objects from several perspectives.  In particular, given such an $F$ one can write it in terms of its components
\begin{align*}
F=(F_1,F_2,\ldots,F_d),
\end{align*}
where each $F_j \in M_b(\mathbb{R}^d;\mathbb{R})$.  For such finite ``charges" (in the terminology of \cite{Smirnov}), one can define the total variation measure of $F$, $||F||\in M_b(\mathbb{R}^d)$, the scalar Radon measure defined for Borel sets $E\subset \mathbb{R}^d$ by
\begin{align*}
||F||(E):= \sup_{E= \cup_k E_k} \sum_k |F(E_k)|
\end{align*}
where $|F(E_k)|$ denotes the Euclidean norm of the vector obtained by integration of each of the components $F_j$ over $E_k$ and $\{E_k\}_{k \in \mathbb{N}}$ is a partition of $E$ into Borel subsets.

The assumption that $F$ is solenoidal implies that $F$ is a normal current, i.e. $F \in \mathbb{N}_1(\mathbb{R}^d)$.  Among other results, one of the achievements of Smirnov in \cite{Smirnov} is to prove that such objects admit an integral representation over curves of length $l$.  His results can be shown to imply that there exist oriented $C^1$ closed curves $\Gamma_{i,l}$ with induced measures $\mu_{\Gamma_{i,j}}$ as defined by \eqref{measure_definition} such that
\begin{align*}
\int_{\mathbb{R}^d}  \Phi \cdot dF = \lim_{l \to \infty} \frac{||F||(\mathbb{R}^d)}{n_l \cdot l} \sum_{i=1}^{n_l}  \int_{\mathbb{R}^d} \Phi \cdot \mu_{\Gamma_{i,l}}
\end{align*}
with
\begin{align*}
\lim_{l \to \infty} \frac{1}{n_l \cdot l} \sum_{i=1}^{n_l}  ||\mu_{\Gamma_{i,l}}||(\mathbb{R}^d)= 1.
\end{align*}

Each of these curves $\Gamma_{i,l}$, as well as those obtained by Lemma \ref{surgery-lem} in Section \ref{surgery}, is an oriented piecewise $C^1$ closed curve and therefore can be identified with an integral current, i.e. (see  \cite[p.~381]{Federer})
\begin{align*}
T:=\mu_\Gamma \in \mathbb{I}_1(\mathbb{R}^d).
\end{align*}
Moreover, as for any $1$-form
\begin{align*}
\partial T(\psi):= T(d\psi) \equiv 0,
\end{align*}
one has by \cite[4.2.10]{Federer} that there exists a generalized minimal surface $S \in \mathbb{I}_2(\mathbb{R}^d)$ such that
\begin{align*}
&\partial S=T \\
&||S||(\mathbb{R}^d)^{1/2} \leq c ||T||(\mathbb{R}^d)=c ||\mu_\Gamma||(\mathbb{R}^d),
\end{align*}
where $||S||$, $||T||$, and $||\mu_\Gamma||$ denote the total variation measures, as above, and $||S||(\mathbb{R}^d)$, $||T||(\mathbb{R}^d)$,  and $||\mu_\Gamma||(\mathbb{R}^d)$ their masses.

The current $S$ can be expressed in coordinates by
\[
S = \sum_{ij} S_{ij} \frac{\partial}{\partial x_i} \otimes
\frac{\partial}{\partial x_j},
\]
where $S_{ij}$ are Radon measures.  In these coordinates, the expression
$p_t\ast \partial S$ can also be written
\[
    p_t\ast \partial S =
    \sum_{ij} p_t \ast \partial_i (S_{ij} - S_{ji})
    \frac{\partial}{\partial x_j}
    =
    \sum_{ij} \frac{\partial p_t}{\partial x_i} \ast (S_{ij} - S_{ji})
    \frac{\partial}{\partial x_j}.
\]
In particular, we have the pointwise bound
\[
    |p_t\ast \partial S|
    \leq c(d) |\nabla p_t| \ast ||S||
\]
for some $c(d)>0$ that depends only on the dimension.

Let us now recall some results concerning the Lorentz spaces $L^{q,r}(\mathbb{R}^d)$, where we follow the development of R. O'Neil in \cite{oneil}.    We begin with some definitions related to the non-increasing rearrangement of a function.
\begin{definition}
For $f$ a measurable function on $\mathbb{R}^d$, we define
\begin{align*}
m(f,y):= |\{ |f|>y\}|.
\end{align*}
As this is a non-increasing function of $y$, it admits a left-continuous inverse, called the non-negative rearrangement of $f$, and which we denote $f^*(x)$.  Further, for $x>0$ we define
\begin{align*}
f^{**}(x):= \frac{1}{x}\int_0^x f^*(t)\;dt.
\end{align*}
\end{definition}
We can now give a definition of the Lorentz spaces $L^{q,r}(\mathbb{R}^d)$.
\begin{definition}
Let $1<q<+\infty$ and $1\leq r<+\infty$.  We define
\begin{align*}
\|f\|_{L^{q,r}(\mathbb{R}^d)} := \left( \int_0^\infty \left[t^{1/q} f^{**}(t)\right]^r\frac{dt}{t}\right)^{1/r},
\end{align*}
and for $1\leq q \leq+\infty$ and $r=+\infty$
\begin{align*}
\|f\|_{L^{q,\infty}(\mathbb{R}^d)} := \sup_{t>0} t^{1/q} f^{**}(t).
\end{align*}
Finally, the Lorentz space $L^{q,r}(\mathbb{R}^d)$ is defined as
\begin{align*}
L^{q,r}(\mathbb{R}^d) := \left\{ f \text{ measurable} : \|f\|_{L^{q,r}(\mathbb{R}^d)} <+\infty\right\}.
\end{align*}

\end{definition}

For such parameters $q,r$, these functionals can be shown to be norms and the associated spaces $L^{q,r}(\mathbb{R}^d)$ Banach spaces  (see, e.g., \cite[Section 2]{Hunt}).  Moreover, one can prove that the dual of $L^{q,r}(\mathbb{R}^d)$ is $L^{q',r'}(\mathbb{R}^d)$ for $1<q<+\infty$ and $1\leq r < +\infty$, so that by the Hahn-Banach theorem one has
\begin{align*}
\| f\|_{L^{q,r}(\mathbb{R}^d)} = \sup \left\{ \left| \int_{\mathbb{R}^d} fg \;dx \right| : g \in L^{q',r'}(\mathbb{R}^d) \;\; \|g \|_{L^{q',r'}(\mathbb{R}^d)}\leq 1\right\}.
\end{align*}

One has a version of H\"older's inequality (this is a variant of Theorem 3.4 in \cite{oneil}, whose statement and proof can be found in \cite[Section 2]{MS}):
\begin{theorem}\label{holder}
Let $f \in L^{q_1,r_1}(\mathbb{R}^d)$ and $g \in L^{q_2,r_2}(\mathbb{R}^d)$, where
\begin{align*}
\frac{1}{q_1}+\frac{1}{q_2}&=\frac{1}{q}<1\\
\frac{1}{r_1}+\frac{1}{r_2}&\geq  \frac{1}{r},
\end{align*}
for some $r \geq 1$.   Then
\begin{align*}
\|fg\|_{L^{q,r}(\mathbb{R}^d)} \leq e^{1/e}q'\|f \|_{L^{q_1,r_1}(\mathbb{R}^d)}\|g \|_{L^{q_2,r_2}(\mathbb{R}^d)}
\end{align*}
\end{theorem}
Concerning estimates for functions in these spaces, a simpler quantity for our purposes is a quasi-norm which does not involve rearrangements:
\begin{align*}
|||f|||_{L^{q,r}(\mathbb{R}^d)} \equiv q^{1/r} \left(\int_0^\infty \left(t |\{ |f|>t\}|^{1/q}\right)^{r} \frac{dt}{t}\right)^{1/r}.
\end{align*}
In particular, one can show this is equivalent to the norm on $ \|f\|_{L^{q,r}(\mathbb{R}^d)}$ (see, e.g. \cite[Section 2]{MS}):
\begin{proposition}
Let $1<q<+\infty$ and $1\leq r \leq +\infty$.  Then
\begin{align*}
|||f|||_{L^{q,r}(\mathbb{R}^d)} \leq \|f\|_{L^{q,r}(\mathbb{R}^d)}\leq q' |||f|||_{L^{q,r}(\mathbb{R}^d)}.
\end{align*}
\end{proposition}

\section{Potential Estimates for Curves}\label{lemmas}

The goal of this section is to substantiate the inequalities \eqref{global_analogue} and \eqref{local_analogue}.

\begin{lemma}\label{pointwise_global}
Let $\alpha \in (0,1)$.  There exists a constant $C_1=C_1(\alpha,d)>0$ such that
\begin{align*}
|I_\alpha \mu_\Gamma | \leq C_1 \left(\sup_{t>0} |p_t \ast \mu_\Gamma |\right)^{1-\alpha} \left(\sup_{t>0}  t^{1/2}  |\nabla p_t| \ast ||S|| \right)^\alpha
\end{align*}
for every oriented piecewise $C^1$ closed curve $\Gamma \subset \mathbb{R}^d$, where $\mu_\Gamma$ is the measure induced by integration along $\Gamma$ defined by \eqref{measure_definition} and $S \in \mathbb{I}_2(\mathbb{R}^d)$ is an integral current, the generalized minimal surface which satisfies
\begin{align*}
&\partial S = \mu_\Gamma \\
&||S||(\mathbb{R}^d)^{1/2}  \leq c  ||\mu_{\Gamma}||(\mathbb{R}^d)
\end{align*}
and $||S|| \in M_b(\mathbb{R}^d)$ denotes the total variation measure of $S$.
\end{lemma}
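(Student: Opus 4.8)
\emph{Proof strategy.} The plan is to run the heat--semigroup argument underlying the pointwise inequality \eqref{interpolation}, with the integral current $S$ taking over the role played, in the curl--free setting of \cite{Spector1,KrantzPelosoSpector,Spector2}, by $\chi_E$ viewed as the $d$--dimensional current of integration over $E$ (so that $D\chi_E$ is its boundary). Write $\mu:=\dot\gamma\,\mathcal{H}^1\restrict\Gamma$; since $\Gamma$ is a closed piecewise $C^1$ curve, $\mu$ is a compactly supported integral $1$--cycle, so by the Federer--Fleming compactness and isoperimetric theorems for integral currents there is an area--minimizing $S\in\mathbb{I}_2(\mathbb{R}^d)$ with compact support, $\partial S=\mu$, and $\|S\|(\mathbb{R}^d)^{1/2}\le c_d|\Gamma|$; this is the generalized minimal surface in the statement. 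Set
\[
M_1(x):=\sup_{t>0}\bigl|p_t\ast\mu(x)\bigr|,\qquad M_2(x):=\sup_{t>0}t^{1/2}\bigl(|\nabla p_t|\ast\|S\|\bigr)(x).
\]
It suffices to show $|I_\alpha\mu(x)|\le C_1\,M_1(x)^{1-\alpha}M_2(x)^{\alpha}$ at each $x$ with $M_1(x),M_2(x)\in(0,\infty)$, the degenerate cases being immediate.

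Insert the heat--kernel representation \eqref{riesz_potentials} of $I_\alpha\mu$ and split the $t$--integral at an arbitrary $T>0$:
\[
|I_\alpha\mu(x)|\le\frac{1}{\Gamma(\alpha/2)}\int_0^T t^{\alpha/2-1}|p_t\ast\mu(x)|\,dt+\frac{1}{\Gamma(\alpha/2)}\int_T^\infty t^{\alpha/2-1}|p_t\ast\mu(x)|\,dt.
\]
In the first integral, bound $|p_t\ast\mu(x)|\le M_1(x)$ and integrate (finite since $\alpha>0$), obtaining a term $\lesssim_{\alpha}M_1(x)\,T^{\alpha/2}$. For the second integral we exploit $\mu=\partial S$: for each coordinate $j$, by the definition of the boundary of a current together with the compactness of the support of $S$,
\[
(p_t\ast\mu)_j(x)=\bigl\langle\partial S,\,p_t(x-\cdot)\,dy_j\bigr\rangle=\bigl\langle S,\,d_y\!\left[p_t(x-y)\,dy_j\right]\bigr\rangle=-\sum_{k=1}^{d}\int(\partial_k p_t)(x-y)\,\bigl\langle\vec S(y),\,dy_k\wedge dy_j\bigr\rangle\,d\|S\|(y).
\]
Since $S$ is an integral current, its orienting $2$--vector $\vec S$ is a unit simple multivector $\|S\|$--a.e., so $|\langle\vec S(y),dy_k\wedge dy_j\rangle|\le1$, and therefore $|p_t\ast\mu(x)|\le C_d\,(|\nabla p_t|\ast\|S\|)(x)\le C_d\,t^{-1/2}M_2(x)$. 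Integrating gives $\int_T^\infty t^{\alpha/2-1}|p_t\ast\mu(x)|\,dt\lesssim_{\alpha,d}M_2(x)\int_T^\infty t^{(\alpha-3)/2}\,dt\lesssim_{\alpha,d}M_2(x)\,T^{(\alpha-1)/2}$, the integral converging precisely because $\alpha<1$ --- this is the only place the hypothesis $\alpha\in(0,1)$ is used.

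Adding the two estimates yields $|I_\alpha\mu(x)|\le C(\alpha,d)\bigl(M_1(x)\,T^{\alpha/2}+M_2(x)\,T^{(\alpha-1)/2}\bigr)$ for every $T>0$; taking $T=(M_2(x)/M_1(x))^2$ balances the two terms, each becoming $M_1(x)^{1-\alpha}M_2(x)^{\alpha}$, and gives the lemma with $C_1=2C(\alpha,d)$. I expect the one genuinely delicate step to be the current--theoretic integration by parts in the middle paragraph: identifying the vector measure $\mu$ with the $1$--current $\partial S$, justifying the pairing $\langle\partial S,\,p_t(x-\cdot)\,dy_j\rangle=\langle S,\,d[p_t(x-\cdot)\,dy_j]\rangle$ for a test form that is smooth but not compactly supported (which is exactly why one fixes a compactly supported --- e.g.\ area--minimizing --- filling $S$), and the pointwise bound $|\langle\vec S,dy_k\wedge dy_j\rangle|\le|\vec S|=1$, which rests on the rectifiable structure of integral currents with the integer multiplicity absorbed into $\|S\|$. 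Granting this, the two scalar time--integral estimates and the one--parameter optimization in $T$ are routine, exactly as in the curl--free model of \cite{Spector2}.
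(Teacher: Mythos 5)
Your proposal is correct and follows essentially the same route as the paper: split the heat-kernel representation of $I_\alpha\mu$ at a time $T$, bound the short-time piece by $M_1(x)T^{\alpha/2}$, use $\mu=\partial S$ together with the pointwise bound $|p_t\ast\partial S|\le c(d)\,|\nabla p_t|\ast\|S\|$ to bound the long-time piece by $C\,M_2(x)T^{(\alpha-1)/2}$ (convergent exactly because $\alpha<1$), and optimize in $T$. The only cosmetic difference is that you derive the key pointwise bound via the current pairing with the orienting $2$-vector $\vec S$, while the paper records the same computation in coordinates $S_{ij}$ in its preliminaries; your attention to the compact support of $S$ when pairing with the non-compactly-supported form $p_t(x-\cdot)\,dy_j$ is a point the paper leaves implicit.
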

\begin{proof}[Proof of Lemma \ref{pointwise_global}]
As discussed in Section \ref{preliminaries} we identify
\begin{align*}
T&=\mu_\Gamma \in \mathbb{I}_1(\mathbb{R}^d).
\end{align*}

In particular we have
\begin{align*}
I_\alpha \mu_\Gamma &= \frac{1}{\Gamma(\alpha/2)} \int_0^\infty t^{\alpha/2-1} p_t \ast  T dt \\
&= \frac{1}{\Gamma(\alpha/2)} \int_0^r t^{\alpha/2-1} p_t \ast T \;dt+\frac{1}{\Gamma(\alpha/2)} \int_r^\infty t^{\alpha/2-1} p_t \ast T \;dt\\
&=: I(r)+II(r).
\end{align*}
For $I(r)$, we have
\begin{align*}
|I(r)| &= \left|\frac{1}{\Gamma(\alpha/2)} \int_0^r t^{\alpha/2-1} p_t \ast T \;dt\right|\\
&\leq \frac{1}{\Gamma(\alpha/2)} \sup_{t>0} \left| p_t \ast T \right|  \int_0^r t^{\alpha/2-1}  \;dt \\
&= \frac{1}{\Gamma(\alpha/2+1)} r^{\alpha/2} \sup_{t>0}\left| p_t \ast T\right| .
\end{align*}
Meanwhile, for $II(r)$, we use the fact that $T=\partial S$ to write circulation integral as an integral over the minimal surface spanning $\Gamma$:
\begin{align*}
|II(r)| &= \left|\frac{1}{\Gamma(\alpha/2)} \int_r^\infty t^{\alpha/2-1} p_t \ast  \partial S\; dt \right|.
\end{align*}
Then as the computation in Section \ref{preliminaries} shows
\begin{align*}
|p_t\ast \partial S| \leq c(d) |\nabla p_t| \ast ||S||
\end{align*}
we have
\begin{align*}
|II(r)|
&\leq \frac{c(d)}{\Gamma(\alpha/2)} \frac{r^{\alpha/2-1/2}}{1/2-\alpha/2} \sup_{t>0} t^{1/2} |\nabla p_t| \ast ||S||
\end{align*}
One can then optimize in $r$, though the choice such that the upper bounds we obtain for $I(r)$ and $II(r)$ are equal is sufficient for our purposes, from which we obtain
\begin{align*}
|I_\alpha \mu_\Gamma | \leq C \left(\sup_{t>0}\left|p_t \ast \mu_\Gamma \right|\right)^{1-\alpha} \left(\sup_{t>0} t^{1/2}|\nabla p_t| \ast ||S|| \right)^\alpha
 \end{align*}
with
\begin{align*}
C_1= 2 \frac{1}{\Gamma(\alpha/2+1)^{1-\alpha}} \left(\frac{1}{\Gamma(\alpha/2)} \frac{c(d)}{1/2-\alpha/2}\right)^{\alpha}.
\end{align*}

\end{proof}

\begin{lemma}\label{interpolation1}
Let $\alpha \in (0,d-1)$.  There exists a constant $C_2=C_2(\alpha,d)>0$ such that
\begin{align*}
|I_\alpha \mu_\Gamma | \leq C_2 \left(\sup_{t>0} |p_t \ast \mu_\Gamma |\right)^{1-\frac{\alpha}{d-1}}
\|I_{d-1}\mu_\Gamma\|_{BMO} ^\frac{\alpha}{d-1}
\end{align*}
for every oriented piecewise $C^1$ curve $\Gamma \subset \mathbb{R}^d$, where $\mu_\Gamma$ is the measure induced by oriented integration along $\Gamma$ defined by \eqref{measure_definition}.
\end{lemma}

\begin{proof}[Proof of Lemma \ref{interpolation1}]
We have
\begin{align*}
I_\alpha \mu_\Gamma &= \frac{1}{\Gamma(\alpha/2)} \int_0^\infty t^{\alpha/2-1} p_t \ast  \mu_\Gamma \;dt \\
&= \frac{1}{\Gamma(\alpha/2)} \int_0^r t^{\alpha/2-1} p_t \ast \mu_\Gamma \;dt+\frac{1}{\Gamma(\alpha/2)} \int_r^\infty t^{\alpha/2-1} p_t \ast \mu_\Gamma \;dt\\
&=: I(r)+II(r).
\end{align*}
For $I(r)$, we proceed as in the proof of Lemma \ref{pointwise_global} to obtain
\begin{align*}
|I(r)| &= \left|\frac{1}{\Gamma(\alpha/2)} \int_0^r t^{\alpha/2-1} p_t \ast \mu_\Gamma \;dt\right|\\
&\leq \frac{1}{\Gamma(\alpha/2)} \sup_{t>0} \left|p_t \ast \mu_\Gamma \right| \int_0^r t^{\alpha/2-1}  \;dt \\
&= \frac{1}{\Gamma(\alpha/2+1)} r^{\alpha/2} \sup_{t>0} \left|p_t \ast \mu_\Gamma \right|.
\end{align*}
Meanwhile, for $II(r)$, we introduce the fractional Laplacian and its inverse the Riesz potential on $p_t$, which is justified for $t>0$ as $p_t$ is smooth and decays exponentially:
\begin{align*}
p_t = I_{d-1} (-\Delta)^{(d-1)/2} p_t.
\end{align*}
In particular, Fubini's theorem and a change of variables implies that
\begin{align*}
|II(r)| = \left|\frac{1}{\Gamma(\alpha/2)} \int_r^\infty t^{\alpha/2-1} (-\Delta)^{(d-1)/2} p_t \ast I_{d-1}\mu_\Gamma  \;dt \right|,
\end{align*}
so that by $\mathcal{H}^1-BMO$ duality we find
\begin{align*}
|II(r)| \leq \frac{1}{\Gamma(\alpha/2)} \int_r^\infty t^{\alpha/2-1} \|(-\Delta)^{(d-1)/2} p_t\|_{\mathcal{H}^1(\mathbb{R}^d)} \|I_{d-1}\mu_\Gamma\|_{BMO}\;dt.
\end{align*}
The observation that
\begin{align*}
\|(-\Delta)^{(d-1)/2} p_t\|_{\mathcal{H}^1(\mathbb{R}^d)} = \frac{c}{t^{(d-1)/2}}
\end{align*}
thus yields
\begin{align*}
|II(r)| &\leq\frac{c}{\Gamma(\alpha/2)}\|I_{d-1}\mu_\Gamma\|_{BMO} \int_r^\infty t^{\alpha/2-1-(d-1)/2}\;dt \\
&=\frac{c}{\Gamma(\alpha/2)}\|I_{d-1}\mu_\Gamma\|_{BMO} \frac{r^{\alpha/2-(d-1)/2}}{(d-1)/2-\alpha/2}.
\end{align*}

One can then optimize in $r$, though the choice such that the upper bounds we obtain for $I(r)$ and $II(r)$ are equal is sufficient for our purposes, from which we obtain
\begin{align*}
|I_\alpha \mu_\Gamma | \leq C \left(\sup_{t>0} \left|p_t \ast \mu_\Gamma \right|\right)^{1-\frac{\alpha}{d-1}} \|I_{d-1}\mu_\Gamma\|_{BMO}^\frac{\alpha}{d-1}
 \end{align*}
with
\begin{align*}
C_2= 2 \frac{1}{\Gamma(\alpha/2+1)^{1-\frac{\alpha}{d-1}}} \left(\frac{c}{\Gamma(\alpha/2)} \frac{1}{(d-1)/2-\alpha/2}\right)^\frac{\alpha}{d-1}.
\end{align*}
\end{proof}

\section{The Surgery Lemma and Atomic Decomposition} \label{surgery}
In this section we first state and prove the surgery lemma.  We then prove the atomic decomposition of the space of divergence free measures asserted in Theorem \ref{approximation}.  To this end, let us introduce some notation.  For an oriented piecewise $C^{1}$ (not necessarily closed) curve $\Gamma \subset \mathbb{R}^d$ we denote by $\gamma$ its parameterization by arclength, and in the sequel we identify $\Gamma$ and $\gamma$.  In particular, if $\Gamma$ has length $L$, this means
\begin{enumerate}
\item  $\gamma \in C^{1}([0,L]\setminus \{s_i\}_{i=1}^k;\mathbb{R}^d)$  for some finite set $\{s_i\}_{i=1}^k$,
\item $|\dot{\gamma}(s)|=1$ for every $s\in[0,L]\setminus \{s_i\}_{i=1}^k$.
 \end{enumerate}
 We say that $\Gamma \subset \mathbb{R}^d$ is an oriented piecewise $C^{1}$ {\bf closed} curve if in addition
 \begin{enumerate}
\item[3)]  $\gamma(0)=\gamma(L)$,
 \end{enumerate}
and in this case it will be convenient to use a distance on the interval $[0,L]$ which respects the closeness of points $s \in [0,\epsilon)$ and $t \in (L-\epsilon,L]$.  We introduce the symbol
 \[
    d_\Gamma(s,t) := \min_{k\in\mathbb{Z}} |s - t + kL|
\]
to denote this distance.  Next, recall that for any oriented piecewise $C^1$ curve $\Gamma$, definition \eqref{measure_definition} gives an associated measure $\mu_\Gamma$, which is the measure induced by integration along $\Gamma$.  For the convenience of the reader we recall here equation \eqref{measure_definition}:
\[
    \int \Phi(x) \cdot d \mu_\Gamma =
    \int_0^L \Phi(\gamma(s)) \cdot\dot{\gamma}(t) \;d t.
\]
Finally, we introduce a cut operation $C(\Gamma,t,t')$, which takes as inputs a piecewise $C^1$ closed curve $\Gamma$ of some length $L$ and $t,t'\in [0,L]$, and returns two piecewise $C^1$ closed curves $\Gamma'$ and $G$ such that
\[
    \mu_\Gamma = \mu_{\Gamma'} + \mu_G.
\]
In particular, if we let $x=\gamma(t)$, $y=\gamma(t')$,  denote the images of the points $t,t'$ by $\gamma$, $\Gamma'$ and $G$ are the piecewise $C^1$ closed curves whose arclength parametrizations $\gamma'\in C^{0,1}([0,L+|y-x|-(t'-t)],\mathbb{R}^d)$ and $g\in C^{0,1}([0,|y-x|+(t'-t)],\mathbb{R}^d)$ are given by the formulas
\[
    \gamma'(s) =
    \begin{cases}
        \gamma(s), &s\leq t \\
        x + (s-t) (y-x) |y-x|^{-1}, &t < s < t+|y-x| \\
        \gamma(s - |y-x| + (t'-t)), &s > t+|y-x|
    \end{cases}
\]
and
\[
    g(s) =
    \begin{cases}
        \gamma(s + t), &s < t'-t \\
        y + (s-(t'-t)) (x-y) |x-y|^{-1},
        & t'-t<s < t'-t +|x-y|.
    \end{cases}
\]
We call the two straight line segments introduced in the cut ``bridges" and the endpoints ``corners".

We are now prepared to state the surgery lemma.
\begin{lemma}
    \label{surgery-lem}
Suppose $\Gamma$ is an oriented $C^1$ closed  curve.  There exist oriented piecewise $C^1$ closed curves $\{\Gamma_j\}_{j=1}^{N(\Gamma)}$ with induced measures $\{\mu_{\Gamma_j}\}_{j=1}^{N(\Gamma)}$ defined by \eqref{measure_definition} such that
    \begin{enumerate}
\item \begin{align*}
\mu_\Gamma= \sum_{j=1}^{N(\Gamma)} \mu_{\Gamma_j};
\end{align*}
    \item The total length of the curves obtained in the decomposition satisfies
\begin{align*}
\sum_{j=1}^N ||\mu_{\Gamma_j}||(\mathbb{R}^d) \leq 10 ||\mu_{\Gamma} ||(\mathbb{R}^d) ;
\end{align*}
    \item
    Each $\mu_{\Gamma_j}$ satisfies the ball growth condition
  \begin{align*}
 \sup_{x\in \mathbb{R}^d,r>0} \frac{||\mu_{\Gamma_j}||(B_r(x))}{r} \leq 1000.
\end{align*}
    \end{enumerate}
\end{lemma}

Toward the proof of the surgery lemma, our first observation is that a globally invertible curve admits a ball growth condition which is inversely proportional to its parameter of invertibility.  This is made precise in
\begin{lemma}\label{invertible}
Suppose that $\gamma :[0,L] \to \mathbb{R}^d$ is a piecewise $C^1$ parameterization of a curve $\Gamma \subset \mathbb{R}^d$ (not necessarily closed) and that
\begin{align}\label{uniform_bilip}
        |\gamma(s)-\gamma(t)| \geq \epsilon_0 d_\Gamma(s,t)
\end{align}
for some $\epsilon_0 \in (0,1)$ and for all $s,t \in [0,L]$.  Then $\mu_\Gamma$ satisfies the ball growth condition
    \begin{align*}
        ||\mu_\Gamma||(B_r(x)) \leq 4\epsilon_0^{-1}r.
    \end{align*}
    \end{lemma}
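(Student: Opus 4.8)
The plan is to estimate the total variation measure $\|\mu_\gamma\|(B_r(x))$ by pulling the ball back to the parameter interval. Write $E := \gamma^{-1}(B_r(x)) \subset [0,L]$, which is a relatively open subset of the interval. Since $\gamma$ is Lipschitz with $|\dot\gamma|=1$ almost everywhere, one has the crude bound $\|\mu_\gamma\|(B_r(x)) \leq \mathcal{H}^1(E) = |E|$, because $\|\mu_\gamma\|$ is the push-forward under $\gamma$ of arclength measure on $[0,L]$ (up to the unit-speed factor). Thus it suffices to show $|E| \leq 4\epsilon_0^{-1} r$.

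The key step is to control $|E|$ using the bi-Lipschitz-type hypothesis \eqref{uniform_bilip}. The set $E$ is open in $[0,L]$, so it decomposes into countably many disjoint open arcs (intervals in the $d_\Gamma$-metric on the circle of circumference $L$, or ordinary intervals if $E$ does not wrap around). Pick any two parameters $s,t \in E$ lying in the same connected component; then $\gamma(s),\gamma(t) \in B_r(x)$, so $|\gamma(s)-\gamma(t)| < 2r$, and hypothesis \eqref{uniform_bilip} forces $d_\Gamma(s,t) < 2\epsilon_0^{-1} r$. Hence the $d_\Gamma$-diameter of each connected component of $E$ is at most $2\epsilon_0^{-1} r$. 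This does not immediately bound $|E|$ because there could be many components; the mechanism that limits the number of components is exactly the same bi-Lipschitz bound applied \emph{across} components: if $s$ lies in one component and $t$ in another, then $\gamma(s)$ and $\gamma(t)$ are both in $B_r(x)$, so again $d_\Gamma(s,t) < 2\epsilon_0^{-1} r$. Therefore the \emph{entire} set $E$ has $d_\Gamma$-diameter at most $2\epsilon_0^{-1}r$, meaning $E$ is contained in a single $d_\Gamma$-arc of length $2\epsilon_0^{-1}r$. Consequently $|E| \leq 2\epsilon_0^{-1} r$, and since the unit-speed push-forward gives $\|\mu_\gamma\|(B_r(x)) \leq |E|$, we even get the bound $2\epsilon_0^{-1}r \leq 4\epsilon_0^{-1}r$ with room to spare (the factor $4$ in the statement is simply a safe constant).

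The main obstacle — and the only subtle point — is the metric bookkeeping: \eqref{uniform_bilip} is phrased with the \emph{circular} distance $d_\Gamma$, so one must be careful that "$E$ has small $d_\Gamma$-diameter" genuinely implies "$|E|$ is small." If $d_\Gamma(s,t) \leq 2\epsilon_0^{-1}r$ for all $s,t\in E$ and $2\epsilon_0^{-1}r < L/2$, then $E$ lies in an arc of the parameter circle of length $2\epsilon_0^{-1}r$, so $|E|\le 2\epsilon_0^{-1}r$; and if $2\epsilon_0^{-1}r \geq L/2$ then already $L \leq 4\epsilon_0^{-1}r$, so trivially $\|\mu_\gamma\|(B_r(x)) \leq \|\mu_\gamma\|(\mathbb{R}^d) \leq L \leq 4\epsilon_0^{-1}r$. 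Either way the claimed inequality holds. Note that for a non-closed curve one just replaces $d_\Gamma$ by the ordinary distance $|s-t|$ and the argument is identical (and slightly simpler, with no wrap-around case). This completes the proof.
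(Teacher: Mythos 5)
Your proof follows essentially the same route as the paper's: pull the ball back to the parameter interval and use \eqref{uniform_bilip} to bound the Lebesgue measure of the preimage. The paper organizes this by picking one $s$ with $\gamma(s)\in B_r(x)$, enlarging to $B_{2r}(\gamma(s))$, and noting that the preimage is contained in the $d_\Gamma$-ball $\{t: d_\Gamma(s,t)<2\epsilon_0^{-1}r\}$, which has measure at most $4\epsilon_0^{-1}r$.

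There is, however, one false step in your version. You claim that since $E=\gamma^{-1}(B_r(x))$ has $d_\Gamma$-diameter at most $2\epsilon_0^{-1}r$, it lies in a single arc of length $2\epsilon_0^{-1}r$, so $|E|\le 2\epsilon_0^{-1}r$ ``with room to spare.'' On a circle this implication fails even when the diameter is below $L/2$: take $L=3$ and $E$ a small neighborhood of $\{0,1,2\}$; all pairwise $d_\Gamma$-distances equal $1$, yet $E$ fits in no arc of length $1$ (only in one of length $2$). The correct deduction is the one-basepoint version: fix any $s_0\in E$; then $E\subset\{t: d_\Gamma(s_0,t)<2\epsilon_0^{-1}r\}$, a set of measure at most $4\epsilon_0^{-1}r$. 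So the factor $4$ in the statement is not slack --- it is exactly the doubling from passing from a radius to a diameter of a $d_\Gamma$-ball. With that one-line repair your argument is complete and coincides with the paper's. (A minor aside: the lemma as stated uses the circular distance $d_\Gamma$ even for non-closed curves, which is a weaker hypothesis than using $|s-t|$, so your suggested replacement in the non-closed case would actually change, and weaken, the statement.)
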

\begin{proof}
Let $x\in\mathbb{R}^d$ and $r>0$.  If  $||\mu_\Gamma||(B_r(x)) > 0$, then there exists $s\in [0,L]$ such that $\gamma(s)\in B_r(x)$.  Then $B_r(x)\subset B_{2r} (\gamma(s))$, and so
    \begin{align*}
        ||\mu_\gamma||(B_r(x)) \leq ||\mu_\gamma||(B_{2r}(\gamma(s)))
        = |\{t\in[0,L] | |\gamma(s)-\gamma(t)| \leq 2r\}|.
    \end{align*}
    Because $\gamma$ satisfies~\eqref{uniform_bilip},
    \[
        |\{t\in[0,L] | |\gamma(s)-\gamma(t)| \leq 2r\}|
        \leq |\{ t\in[0,L] | d_L(s,t) < 2\epsilon_0^{-1} r\}|
        \leq 4\epsilon_0^{-1}r,
    \]
    as desired.
\end{proof}

We next observe that every $C^1$-curve admits a ball growth constant which can be taken proportionally to its length and inversely proportional to its $C^1$ norm, which is the content of the following
\begin{lemma}
    \label{C1-small-ball}
    Let $\gamma\in C^1([0,L];\mathbb{R}^d)$ be a curve parametrized by
    arc length (but not necessarily a closed loop)
    and satisfying the uniform continuity condition
    \[
        |\dot{\gamma}(t) - \dot{\gamma}(t')| \leq \frac{1}{3}.
    \]
    for all $t,t'$ with $|t-t'| \leq \delta$.

    Then $\mu_\Gamma$ satisfies the ball growth condition
    \[
        ||\mu_\Gamma||(B_r(x)) \leq 6\lceil \delta^{-1}L\rceil r.
    \]
    for all $x\in\mathbb{R}^d$.
\end{lemma}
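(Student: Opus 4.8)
The plan is to reduce the ball growth estimate to a statement about arclength by exploiting the near-straightness of $\gamma$ on scale $\delta$. First I would partition $[0,L]$ into $\lceil \delta^{-1} L\rceil$ consecutive intervals $J_1,\dots,J_m$, each of length at most $\delta$. On each such interval the hypothesis $|\dot\gamma(t)-\dot\gamma(t')|\le \tfrac13$ gives, by the fundamental theorem of calculus applied to $s\mapsto (\gamma(s)-\gamma(t_0))\cdot \dot\gamma(t_0)$ for a basepoint $t_0\in J_k$, the quantitative injectivity bound
\[
    |\gamma(s)-\gamma(t)| \;\ge\; \Bigl(1-\tfrac13\Bigr)|s-t| \;=\; \tfrac23\,|s-t|
    \qquad \text{for } s,t\in J_k .
\]
Indeed $\frac{d}{ds}(\gamma(s)-\gamma(t))\cdot\dot\gamma(t) = \dot\gamma(s)\cdot\dot\gamma(t) = 1 - \tfrac12|\dot\gamma(s)-\dot\gamma(t)|^2 \ge 1-\tfrac1{18}$, which after integration and Cauchy--Schwarz yields even a bound with constant close to $1$; the crude constant $\tfrac23$ suffices. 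Thus the restriction $\gamma|_{J_k}$ satisfies a bi-Lipschitz condition of the type \eqref{uniform_bilip} with $\epsilon_0 = \tfrac23$ (here $d_\Gamma$ on a genuine subinterval is just $|s-t|$, so the argument of Lemma \ref{invertible} applies verbatim to the non-closed piece).

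Next I would apply Lemma \ref{invertible}, or rather its proof, to each piece $\gamma|_{J_k}$: for any $x\in\mathbb{R}^d$ and $r>0$, if $\mu_{\gamma|_{J_k}}$ charges $B_r(x)$ then there is $s\in J_k$ with $\gamma(s)\in B_r(x)$, hence $B_r(x)\subset B_{2r}(\gamma(s))$ and
\[
    \|\mu_{\gamma|_{J_k}}\|(B_r(x)) \;\le\; \bigl|\{t\in J_k : |\gamma(s)-\gamma(t)|\le 2r\}\bigr|
    \;\le\; \bigl|\{t\in J_k : \tfrac23|s-t|\le 2r\}\bigr| \;\le\; 2\cdot 3r \;=\; 6r.
\]
Since $\mu_\gamma = \sum_{k=1}^m \mu_{\gamma|_{J_k}}$ as measures (the parametrization is split at finitely many points, introducing no mass), the triangle inequality for total variation gives
\[
    \|\mu_\gamma\|(B_r(x)) \;\le\; \sum_{k=1}^m \|\mu_{\gamma|_{J_k}}\|(B_r(x)) \;\le\; 6\lceil \delta^{-1}L\rceil\, r,
\]
which is even slightly stronger than the claimed bound with constant $8$, leaving room for the looser constants one actually gets without optimizing.

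There is no serious obstacle here; the only point requiring a little care is the bookkeeping constant. The estimate per interval is $O(r)$ with an absolute constant, and the number of intervals is $\lceil \delta^{-1}L\rceil$, so the product structure of the final bound is forced. The mild subtlety is that one must phrase the injectivity estimate so that it is valid \emph{within} each $J_k$ but one does not need (and should not claim) injectivity of $\gamma$ across different intervals — that is precisely why the sum over $k$ appears, and why the constant degrades linearly in $\lceil \delta^{-1}L\rceil$ rather than staying bounded. I would also remark that the constant $\tfrac13$ in the hypothesis is not sharp; any bound strictly below $1$ on $|\dot\gamma(t)-\dot\gamma(t')|$ would do, with the final constant adjusted accordingly, and the value $\tfrac13$ is chosen for convenience in the later application to the surgery procedure.
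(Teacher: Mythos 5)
Your proposal is correct and follows essentially the same route as the paper: partition $[0,L]$ into $\lceil \delta^{-1}L\rceil$ subintervals of length at most $\delta$, use the uniform continuity of $\dot{\gamma}$ together with $|\dot{\gamma}|\equiv 1$ to get the quantitative injectivity bound $|\gamma(s)-\gamma(t)|\geq \tfrac{2}{3}|s-t|$ on each piece, apply Lemma \ref{invertible} piecewise, and sum. Your projection argument even yields a slightly sharper constant per piece than the paper's crude triangle-inequality estimate, but the decomposition and bookkeeping are identical.
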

\begin{proof}
We begin by dividing $\Gamma$ into $\lceil \delta^{-1}L\rceil$ pieces of size less than $\delta$,
\begin{align*}
\mu_\Gamma = \bigcup_{i=1}^{\lceil \delta^{-1}L\rceil} \mu_{\Gamma_i}.
\end{align*}
For each $\Gamma_i$ we have
\begin{align*}
\gamma_i(t)-\gamma_i(t') &= \int_t^{t'} \dot{\gamma_i}(s)\;ds \\
&=\int_t^{t'} \dot{\gamma_i}(s)-\dot{\gamma_i}(t)\;ds + (t'-t)\dot{\gamma}(t),
\end{align*}
which because $|t-t'|\leq \delta$, the continuity of $\dot{\gamma}$, and $|\dot{\gamma}|\equiv1$ implies
\begin{align*}
\left|\gamma_i(t)-\gamma_i(t')\right| \geq \frac{2}{3}|t-t'|.
\end{align*}
In particular, by Lemma \ref{invertible} one has
\begin{align*}
||\mu_{\Gamma_i}||(B_r(x)) \leq 6r.
\end{align*}
But then the decomposition implies
\begin{align*}
||\mu_\Gamma||(B_r(x)) &\leq  \sum_{i=1}^{\lceil \delta^{-1}L\rceil} ||\mu_{\Gamma_i}||(B_r(x)) \\
&\leq \sum_{i=1}^{\lceil \delta^{-1}L\rceil} 6r \\
&=6\lceil \delta^{-1}L\rceil r,
\end{align*}
as claimed.
\end{proof}

We next introduce a class of curves are piecewise $C^1$ with scale $\delta$ away from corners and which do not have more than a fixed number of corners in an interval whose length is less than a fixed dilation of $\delta$:
\begin{definition}[$(\delta,\epsilon)$- curve]
    \label{delta-def}
    We say that a piecewise-$C^1$ curve $\Gamma$ with discontinuities in the derivative (corners)
    $\{s_j\}_{j=1}^k$ is a $(\delta,\epsilon)$-curve if the following conditions hold:
\begin{itemize}
    \item \textbf{Uniform continuity of }$\dot{\gamma}$:
        For each $j \in \{1,\ldots, k\}$ and every $t,t' \in (s_j,s_{j+1})$ with $|t-t'|\leq \delta$,
        \[
            |\dot{\gamma}(t) - \dot{\gamma}(t')| \leq \frac{1}{3}.
        \]
    \item \textbf{Spacing condition on corners}:
        For each $t,t'$ with $|t-t'|<\epsilon^{-1}\delta$,
        the interval $(t,t')$ contains strictly fewer than $\lceil \epsilon^{-1}\rceil $ of the $\{s_j\}_{j=1}^k$.
\end{itemize}
\end{definition}

The point of such curves is that they behave similarly to curves which are $C^1$ with scale $\delta$ with respect to the ball growth constant.  In particular, if one additionally assumes invertibility for points $s,t$ such that $d_\Gamma(s,t)\geq \delta$ of a $(\delta,\epsilon)$- curve, one obtains a ball growth constant which is comparable to a large scale invertible $C^1$ curve with scale $\delta$ with no corners.  This is made precise in the following
\begin{lemma}
    \label{full-ball-lem}
    Let $\epsilon \in (0,1)$.  If $\gamma$ is a piecewise-$C^1$ curve which is a $(\delta,\epsilon)$-curve
    and which satisfies
    \begin{equation}
        \label{bilip-2}
        |\gamma(s)-\gamma(t)| > \epsilon d_\Gamma(s,t)
\quad \text{ for all } s,t \text{ with }d_\Gamma(s,t)\geq \delta,
    \end{equation}
    then $\Gamma$ satisfies the ball growth condition
    \[
        ||\mu_\Gamma||(B_r(x)) \leq 72\lceil \epsilon^{-1}\rceil r.
    \]
\end{lemma}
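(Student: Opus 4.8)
I want to prove Lemma \ref{full-ball-lem}, which asserts a ball growth bound $\|\mu_\Gamma\|(B_r(x)) \leq C\lceil \epsilon^{-1}\rceil r$ for a curve $\Gamma$ that is a $(\delta,\epsilon)$-curve and additionally satisfies the large-scale bilipschitz lower bound \eqref{bilip-2} for $d_\Gamma(s,t) \geq \delta$. The strategy is to fix $x \in \mathbb{R}^d$ and $r > 0$ and estimate $\|\mu_\Gamma\|(B_r(x))$ by splitting into cases according to the relative sizes of $r$ and $\delta$, exactly mirroring the proof of Lemma \ref{C1-small-ball} and Lemma \ref{invertible}. As there, if $\|\mu_\Gamma\|(B_r(x)) > 0$ we may pick $s_0$ with $\gamma(s_0) \in B_r(x)$, so $B_r(x) \subset B_{2r}(\gamma(s_0))$ and it suffices to bound $|\{t \in [0,L] : |\gamma(s_0) - \gamma(t)| \leq 2r\}|$.

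\textbf{Case $r \geq \delta$.} Here the relevant pairs $(s_0,t)$ with $|\gamma(s_0)-\gamma(t)| \leq 2r$ split further: those with $d_\Gamma(s_0,t) \geq \delta$, to which \eqref{bilip-2} applies, give $d_\Gamma(s_0,t) \leq 2\epsilon^{-1} r$, hence contribute a set of measure at most $4\epsilon^{-1} r$ just as in Lemma \ref{invertible}; and those with $d_\Gamma(s_0,t) < \delta \leq r$, which contribute a set of measure at most $2\delta \leq 2r$. Adding these gives a bound of the form $C\epsilon^{-1} r$.

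\textbf{Case $r < \delta$.} Now I want to use the $(\delta,\epsilon)$-curve structure. The preimage $\{t : |\gamma(s_0)-\gamma(t)| \leq 2r\} \subset \{t : |\gamma(s_0)-\gamma(t)| < \delta\}$. I will cover this preimage by a controlled number of arcs of length $\epsilon^{-1}\delta$; on each such arc the spacing condition on corners tells me at most $\lceil \epsilon^{-1}\rceil$ corners occur, so the arc is cut by its corners into at most $\lceil \epsilon^{-1}\rceil + 1$ sub-arcs on each of which the uniform continuity of $\dot\gamma$ on scale $\delta$ holds; on each of those sub-arcs, the argument in Lemma \ref{C1-small-ball} (using $|\gamma_i(t)-\gamma_i(t')| \geq \tfrac23|t-t'|$ and $r < \delta$) gives that the portion of the preimage lying in it has measure at most $8r$ — actually $3r$ suffices since one sub-arc of length $\leq \delta$ contributes at most that by invertibility of the $\tfrac23$-bilipschitz map restricted to a window of size $2r$. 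The main point is to bound the \emph{number} of length-$\epsilon^{-1}\delta$ arcs I need: here is where the large-scale condition \eqref{bilip-2} re-enters — if two such arcs are separated by $d_\Gamma \geq \delta$ then their $\gamma$-images are $\geq \epsilon\delta$ apart, but both meet $B_\delta(\gamma(s_0))$, forcing $\epsilon\delta \lesssim \delta$, which bounds the number of well-separated arcs by $O(\epsilon^{-1})$, and then the remaining arcs cluster within $d_\Gamma$-distance $O(\delta)$ of a bounded number of centers, contributing a further $O(\epsilon^{-1}\delta) = O(\epsilon^{-1} r \cdot \delta/r)$ — so I must be careful to phrase this so the final bound is linear in $r$ and not in $\delta$.

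\textbf{The main obstacle.} The delicate step is precisely the last one: in the regime $r \ll \delta$, the naive covering produces bounds involving $\delta$ rather than $r$, which is too weak. The resolution is that the \emph{whole} preimage $\{t: |\gamma(s_0)-\gamma(t)| \leq 2r\}$ — not just its covering by $\epsilon^{-1}\delta$-arcs — must be controlled: on any maximal sub-arc free of corners and of $d_\Gamma$-length $\leq \delta$, the $\tfrac23$-bilipschitz estimate makes the preimage of a $2r$-window have measure $\leq 3r$; the number of such maximal corner-free sub-arcs that actually meet the $2r$-window is controlled by the number of corners meeting it, which by the spacing condition is $\leq \lceil\epsilon^{-1}\rceil$ (since a $2r$-window with $2r < 2\delta$ lies inside a region where the corner count is bounded), plus contributions from how many times the curve \emph{returns} to $B_{2r}(\gamma(s_0))$ from far away in $d_\Gamma$-distance — and each such return, being separated by $d_\Gamma \geq \delta$ from the others, is bounded in number by \eqref{bilip-2} via the packing argument, giving $O(\epsilon^{-1})$ returns each contributing $\leq 3r$. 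Assembling, $\|\mu_\Gamma\|(B_r(x)) \leq C\lceil\epsilon^{-1}\rceil r$ in all cases, which is the claim. I would write the $r < \delta$ case first since it is the hard one and the $r \geq \delta$ case is then a minor variant.
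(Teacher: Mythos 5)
Your overall architecture (split on the size of $r$ relative to $\delta$; in the small-$r$ case, cover the preimage of $B_{2r}(\gamma(s_0))$ by arcs of length $\epsilon^{-1}\delta$, use the spacing condition to bound the corners per arc, and apply the Lemma~\ref{C1-small-ball} estimate on each corner-free piece) is the right one and matches the paper's. The genuine gap is exactly in the step you flag as ``the main obstacle'': bounding the number of $\epsilon^{-1}\delta$-arcs, equivalently the number of ``returns'' of the curve to $B_{2r}(\gamma(s_0))$. The packing argument you propose does not deliver this. Two well-separated arcs meeting $B_\delta(\gamma(s_0))$ only give you two points at Euclidean distance $\geq \epsilon\delta$ inside a ball of radius $\delta$; the conclusion ``$\epsilon\delta \lesssim \delta$'' is vacuously true and forces nothing, and the best a volume-packing count can give is $O(\epsilon^{-d})$ such arcs, not $O(\epsilon^{-1})$ --- which, combined with the $O(\lceil\epsilon^{-1}\rceil r)$ contribution of each arc, would yield a bound of order $\epsilon^{-d-1}r$ rather than $C\lceil\epsilon^{-1}\rceil r$.

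The observation you are missing is that there are \emph{no} returns from $d_\Gamma$-distance $\geq \epsilon^{-1}\delta$ at all: if $d_\Gamma(s_0,t)\geq \epsilon^{-1}\delta$ (which is $\geq\delta$ since $\epsilon<1$), then \eqref{bilip-2} gives $|\gamma(s_0)-\gamma(t)|\geq \epsilon\cdot\epsilon^{-1}\delta=\delta>2r$ once $r<\delta/2$ (you should split the cases at $\delta/2$ rather than $\delta$, precisely so that $B_{2r}(\gamma(s_0))\subset B_\delta(\gamma(s_0))$). Hence the entire preimage $\{t: \gamma(t)\in B_{2r}(\gamma(s_0))\}$ is contained in the single set $I=\{t: d_\Gamma(s_0,t)<\epsilon^{-1}\delta\}$, i.e.\ in just two arcs $I^{\pm}$ of length $\epsilon^{-1}\delta$, one on each side of $s_0$; no counting of returns is needed. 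From there your plan goes through as the paper's does: the spacing condition gives at most $\lceil\epsilon^{-1}\rceil$ corners in each of $I^{\pm}$, hence at most $\lceil\epsilon^{-1}\rceil+1$ corner-free pieces whose lengths sum to at most $\epsilon^{-1}\delta$, and Lemma~\ref{C1-small-ball} applied to each piece and summed yields a bound of order $r\left(\lceil\epsilon^{-1}\rceil+\delta^{-1}\cdot\epsilon^{-1}\delta\right)\lesssim \lceil\epsilon^{-1}\rceil r$.
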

\begin{proof}
Let $x\in\mathbb{R}^d$ and $r>0$.  If $r>\delta/2$ and
    $||\mu_\gamma||(B_r(x)) > 0$, then there exists $s\in [0,L]$ such that
    $\gamma(s)\in B_r(x)$.  Then $B_r(x)\subset B_{2r}(\gamma(s))$, and so
    \[
        ||\mu_\gamma||(B_r(x)) \leq ||\mu_\gamma||(B_{2r}(\gamma(s)))
        = |\{t\in[0,L] | |\gamma(s)-\gamma(t)| \leq 2r\}|.
    \]
    Because $\gamma$ satisfies~\eqref{bilip-2},
    \begin{align*}
        |\{t\in[0,L] | |\gamma(s)-\gamma(t)| \leq 2r\}| &= |\{t\in[0,L] | |\gamma(s)-\gamma(t)| \leq 2r, d_\Gamma(s,t) <\delta\}| \\
        &\;\;+  |\{t\in[0,L] | |\gamma(s)-\gamma(t)| \leq 2r, d_\Gamma(s,t) \geq \delta \}|\\
        &\leq 2 \delta + |\{ t\in[0,L] | d_L(s,t) < 2\epsilon^{-1} r\}|\\
        &\leq 4r+ 4\epsilon^{-1}r\\
        &\leq 8\epsilon^{-1} r.
    \end{align*}
If instead $r<\delta/2$, we again find $s \in[0,L]$ such that $\gamma(s) \in B_r(x)$.  We next define
\begin{align*}
I:= \{t \in [0,L] : d_\Gamma(s,t) <  \epsilon^{-1} \delta\}.
\end{align*}
We claim that if $\gamma(t) \in B_{2r}(\gamma(s))$ then $t \in I$.  Indeed, arguing by contradiction, if $t \notin I$ then $d_\Gamma(s,t) \geq  \epsilon^{-1} \delta \geq \delta$ and therefore by the invertibility condition \eqref{bilip-2} we have
\begin{align*}
|\gamma(s)-\gamma(t)| \geq \epsilon d_\Gamma(s,t) \geq  \delta,
\end{align*}
which says
\begin{align*}
\gamma(t) \notin B_{\delta}(\gamma(s))
\end{align*}
and the result follows as $r<\delta/2$ implies
\begin{align*}
B_{2r}(\gamma(s)) \subset B_{\delta}(\gamma(s)).
\end{align*}
This says that the part of the curve in $B_{2r}(\gamma(s))$ is a subset of the image of two intervals of length at most $\epsilon^{-1}\delta$.  We denote these intervals $I^{+},I^-$, to which we now utilize the spacing condition on $(\delta,\epsilon)$-curves  which ensure that for any $t \in I^+, t'\in I^-$ there are strictly fewer than $\lceil \epsilon^{-1}\rceil $ corners in each of the intervals $(t,s)$, $(s,t')$.  Thus we may find two sets of times $\{\tau^{+}_j\}_{j=1}^K$, $\{\tau^{-}_j\}_{j=1}^K$ with
$K\leq \lceil \epsilon^{-1}\rceil +1$ and such that
\begin{align*}
    I^{+} &= \bigcup_{j=1}^K [\tau^{+} _j,\tau^{+} _{j+1}]
    =: \bigcup_{j=1}^K I^{+} _j \\
        I^{-} &= \bigcup_{j=1}^K [\tau^{-} _j,\tau^{-} _{j+1}]
    =: \bigcup_{j=1}^K I^{+} _j
\end{align*}
and $\gamma|_{I^{+} _j}\in C^1(I^{+} _j;\mathbb{R}^d)$, $\gamma|_{I^{-} _j}\in C^1(I^{-} _j;\mathbb{R}^d)$.  Now using
Lemma~\ref{C1-small-ball} we find
\begin{align*}
    ||\mu_\gamma||(B_{2r}(x))
    &\leq 12r \left(\sum_{j=1}^K (1 + \delta^{-1} (\tau^+_{j+1}-\tau^+_j)) + \sum_{j=1}^K (1 + \delta^{-1} (\tau^-_{j+1}-\tau^-_j)) \right) \\
    &\leq 12r \left((K + \delta^{-1} (\epsilon^{-1}\delta))+(K + \delta^{-1} (\epsilon^{-1}\delta))\right)\\
     &\leq 72 \lceil \epsilon^{-1}\rceil r,
\end{align*}
which proves the claim.
\end{proof}

We are now ready to prove Lemma \ref{surgery-lem}.

\begin{proof}[Proof of Lemma~\ref{surgery-lem}]

Let $\gamma\in C^1([0,L];\mathbb{R}^d)$.  Since $\gamma$ is uniformly continuous on $[0,L]$, there exists some $\delta>0$ such that
for all $s,s'\in[0,L]$ with $d_\gamma(s,s')<\delta$,
\[
    |\dot{\gamma}(s)-\dot{\gamma}(s')| \leq \frac{1}{3}.
\]
In particular, $\Gamma$ is a $(\delta,\infty)$-curve.

We now describe how one implements an iteration of sequences of cuts to $\Gamma$ which results in the family of oriented piecewise $C^1$ closed curves $\{\Gamma_i\}_{i=1}^N$ which satisfy the desired properties.  Let $\epsilon \in (0,\frac{1}{10}]$.  If $\Gamma$ satisfies the conditions of Lemma~\ref{full-ball-lem} with this value of $\epsilon$, we are done.  Otherwise since there are no corners on $\Gamma$, $\Gamma$ is a $(\delta,\epsilon)$-curve, and so its failure to satisfy the conditions of Lemma \ref{full-ball-lem} implies that there exist $s,s'\in[0,L]$
with $d_\Gamma(s,s') \geq \delta$ and
\[
    |\gamma(s)-\gamma(s')| \leq \epsilon d_{\Gamma}(s,s').
\]
In this case, set
\[
    \beta_1 = \min \{ d_{\Gamma}(s,s') \,|\,\, s,s' \in [0,L], d_{\Gamma}(s,s') \geq \delta
    \text{ and } |\gamma(s)-\gamma(s')| \leq \epsilon d_{\Gamma}(s,s')\},
\]
and denote by $t,t'$ the values for which the minimum is attained.  One then performs a cut, $C(\Gamma,t,t')$, the result of which are the curves $\Gamma'$ and $G$.  Here there is a slight ambiguity in the labeling of the outputs of the cut depending on the ordering of $t,t'$.  Without loss of generality we specify that the oriented $C^1$ curve $G$ obtained in this way is the one which contains the piece $\left.\gamma\right|_{[t,t']}$ which satisfies, by minimality of the cut, the large scale invertibility
\begin{align*}
|\gamma(s)-\gamma(s')| > \epsilon d_{\Gamma}(s,s')
\end{align*}
for $s,s' \in [t,t']$.  We claim that for this choice of $G$, $\mu_G$ satisfies a ball growth condition.  To see this, note that $G$ consists of two pieces, $\left.\gamma\right|_{[t,t']}$ and a line segment.  The former is a $(\delta,\epsilon)$-curve, and by minimality of $t,t'$, satisfies the conditions of Lemma~\ref{full-ball-lem}.  Therefore it admits a ball growth constant of $72\lceil \epsilon^{-1}\rceil$.  Meanwhile the segment $[t',t]$ has a ball growth constant of $1$.  Therefore $\mu_G$ admits a ball growth constant of $72\lceil \epsilon^{-1}\rceil+1 \leq 73\lceil \epsilon^{-1}\rceil$.

Concerning the second curve returned by the cut, $\Gamma'$, we observe that this curve has length
\begin{align*}
L'\leq L-(1-\epsilon)\beta_1,
\end{align*}
while the total length of all the curves, including the added bridges, admits the bound
\begin{align*}
TL^1\leq L+2\epsilon \beta_1.
\end{align*}
We then label $\Gamma:=\Gamma'$, $L^1:=L'$ and ask whether $\Gamma$ is a $(\delta,\epsilon)$-curve.  If so, we then ask if the large scale invertibility assumption of Lemma \ref{full-ball-lem} is satisfied.  If so, we are finished.  Otherwise $\Gamma$ is a $(\delta,\epsilon)$-curve, but the large scale invertibility assumption of Lemma \ref{full-ball-lem} is not satisfied, so that we again repeat the preceding cut procedure.  We can continue this process, so long as $\Gamma$ is a $(\delta,\epsilon)$-curve (which it will be until one has performed at least $\lceil \epsilon^{-1}\rceil/2$ cuts), the result of which is a sequence of oriented piecewise $C^1$ closed curves $\{\Gamma_i\}_{i=1}^k$ whose associated measures $\mu_{\Gamma_i}$ admit ball growth constants at most $73\lceil \epsilon^{-1}\rceil$.  Moreover, if $\Gamma$ continues to remain a $(\delta,\epsilon)$-curve, we claim this process will terminate in a finite number of steps.  That this is so follows from the fact that the lengths cut out $\{\beta_i\}_{i=1}^k$ are all greater than or equal to $\delta$, while the length of our curve must remain positive.  In particular, an upper bound of the length of the curve $\Gamma$ after $k$ cuts is
\begin{align*}
L^k\leq L-(1-\epsilon)\sum_{i=1}^k\beta_i,
\end{align*}
which combined with the lower bound on the size of each piece cut out yields
 \begin{align*}
T_1\leq \frac{L}{(1-\epsilon)\delta},
\end{align*}
where we let $T_1$ to denote the number of these (Type I) cuts.  The finiteness of this number ensures that in this case one completes the decomposition in a finite number of steps.  Moreover, in this case the associated measures of the curves satisfy the property $1)$, toward $2)$ one has a bound on the total length of all the curves of at most
\begin{align*}
TL^k&\leq L+2\epsilon \sum_{i=1}^k\beta_i \\
&\leq L +2\epsilon\frac{L}{(1-\epsilon)},
\end{align*}
while the ball growth constant in this case is at most $73\lceil \epsilon^{-1}\rceil$.

It may happen, however, that at some point the condition of being a $(\delta,\epsilon)$-curve is no longer satisfied, in which case the search for pieces which satisfy large scale invertibility is not fruitful, since the conditions of Lemma \ref{full-ball-lem} do not hold and so a piece selected in this way would not necessarily satisfy a ball growth condition.  This is the reason why each time before attempting to perform a Type I cut, one checks to see if $\Gamma$ is a $(\delta,\epsilon)$-curve.  If it is not, instead of looking for violations of large scale invertibility, one must first perform a sequence of operations we call Type II cuts until $\Gamma$ can be restored as a $(\delta,\epsilon)$-curve.

What then is a violation of being a $(\delta,\epsilon)$-curve?  For the curves generated by our cuts, the property of being a $(\delta,\epsilon)$-curve is entirely about the number of corners in a small interval.  That this is so is because $\delta$ is chosen so that the original curve is invertible on scale smaller than $\delta$, while the new curve produced by any cut/surgery replacement consists only of pieces of the original curve and straight line segments separated by corners.  Therefore the local invertibility between corners remains unchanged by the cut process.  In particular, if this condition is violated it is because one can find greater than or equal to $ \lceil \epsilon^{-1}\rceil$ corners in an interval less than $\epsilon^{-1}\delta$.  That is, $\Gamma:=\Gamma'$ from some cut is such that there exist $t,t'$ with $|t-t'| < \epsilon^{-1}\delta$ and the interval $(t,t')$ contains  greater than or equal to$\lceil\epsilon^{-1}\rceil (\geq10)$ corners.  Up to a redefinition of $t,t'$, we may assume that the image of $t,t'$ by $\gamma$ are corner points which contain exactly $\lceil\epsilon^{-1}\rceil-2$ corners between them (if initially there are more corners in a small interval, one simply chooses a smaller interval which contains the requisite number).  We then perform a cut $C(\Gamma,t,t')$, which outputs a loop consisting of this piece containing $\lceil\epsilon^{-1}\rceil-2$ corners and the segment closing it labeled as a $\Gamma_i$ and what remains, together with the segment of opposite orientation, as $\Gamma'$.  That the measures $\mu_{\Gamma_i}$ associated to $\Gamma_i$ obtained by a Type II cut admit a ball growth constant we argue as follows.  We treat first the piece with $\lceil\epsilon^{-1}\rceil-2$ corners, from which the result will follow as $\Gamma_i$ consists of this piece and a line segment.  In analogy with the argument of Lemma \ref{full-ball-lem} we find a set of times $\{\tau_j\}_{j=1}^K$  with
$K= \lceil \epsilon^{-1}\rceil$ (points whose images are the two endpoints and the $\lceil\epsilon^{-1}\rceil-2$ corners in between) and so that
\[
    [t,t'] = \bigcup_{j=1}^K [\tau_j,\tau_{j+1}]
    =: \bigcup_{j=1}^K I_j
\]
and $\gamma_i|_{I_j}\in C^1(I_j;\mathbb{R}^d)$.  Then again Lemma~\ref{C1-small-ball} implies
\[
    ||\mu_\Gamma||(B_{r}(x))
    \leq 12r \sum_{j=1}^K (1 + \delta^{-1} (\tau_{j+1}-\tau_j))
    \leq 12r (\lceil \epsilon^{-1}\rceil+ \delta^{-1} (\epsilon^{-1}\delta)) \leq 24\lceil \epsilon^{-1}\rceil r,
\]
so that with the additional line segment this piece satisfies the ball growth condition with constant at most $24\lceil \epsilon^{-1}\rceil +1\leq 25\lceil \epsilon^{-1}\rceil$.

The result of such a Type II cut is therefore a piece which satisfies a ball growth condition and $\Gamma:=\Gamma'$  which has no additional length and strictly less corners that before.  In particular, if $\Gamma$ has been restored to be a $(\delta,\epsilon)$-curve, one can continue to make Type I cuts.  If not, one should simply perform these Type II cuts until one outputs a $(\delta,\epsilon)$-curve, which necessarily happens in a finite number of these Type II cuts, since each such cut strictly decreases the number of corners, the total number of which is finite (and bounded by twice the number of Type I cuts until that point).

Putting all of this together, our algorithm is as follows.
\begin{breakablealgorithm}
\caption{The Surgery Algorithm}
\begin{algorithmic}[1]
\Statex \textbf{Input}: A $(\delta,\epsilon)$-curve $\Gamma$ with
$\epsilon^{-1}\in\mathbb{N}$, $\epsilon^{-1}\geq 10$.
\Statex \textbf{Output}: A collection $\{\Gamma_i\}$ of curves which satisfies $\mu_\Gamma = \sum \mu_{\Gamma_i}$,  $\sum |\Gamma_i|  \leq 10 |\Gamma|$, $||\mu_{\Gamma_i}||$ with ball growth constant at most $73\epsilon^{-1}$.
\State $i:= 1$;
\State $T_1 := 0$, $T_2 := 0$;
\While{$\Gamma$ does not satisfy the hypotheses of Lemma \ref{full-ball-lem}}
\If{$\Gamma$ is not a $(\delta,\epsilon)$-curve}
  \State Perform a Type II cut to produce a curve $\Gamma_i:=G$ which one argues has ball growth constant $24\epsilon^{-1} +1 \leq 25\epsilon^{-1}$ and the
remainder $\Gamma'$;
  \State $T_2 := T_2+1$;
  \State $\Gamma := \Gamma'$;
  \State $i := i+1$;
\Else
\Comment $\Gamma$ is a $(\delta,\epsilon)$-curve, so $\Gamma$ violates the large-scale invertibility ~\eqref{bilip-2}
    \State Perform a Type I cut to produce a curve $\Gamma_i:=G$ which by Lemma~\ref{full-ball-lem} has a ball growth constant $72\epsilon^{-1} +1 \leq 73\epsilon^{-1}$ and
the remainder $\Gamma'$;
    \State $T_1 := T_1+1$;
  \State $\Gamma := \Gamma'$;
  \State $i := i+1$;
\EndIf
\EndWhile
\State $\Gamma_i := \Gamma$;
\State $N := i$;
\end{algorithmic}
\end{breakablealgorithm}
It only remains to show that the program terminates in a finite number of steps, and that when it does terminate one has the desired bounds.

To see that the program terminates, let us denote by $T_1^k, T^k_2$ to be the number of Type I, Type II cuts after $k$ cuts, respectively, we observe that one has the bounds
 \begin{align*}
T^k_1&\leq \frac{L}{(1-\epsilon)\delta},\\
(\epsilon^{-1}-2)&T^k_2\leq 2T^{k-1}_1.
\end{align*}
The first bound follows in the same manner as before, since each Type I cut removes a length which is at least $(1-\epsilon)\delta$.  The second bound follows from the fact that each Type I cut produces two corners while a Type II cut removes $\epsilon^{-1}-2$ corners.  As increasing sequences of numbers bounded above, $T^k_1,T_2^k$ admits limits $T_1,T_2$ with the bounds
 \begin{align*}
T_1&\leq \frac{L}{(1-\epsilon)\delta}\\
T_2&\leq \frac{2\epsilon}{(1-2\epsilon)}\frac{L}{(1-\epsilon)\delta}.
\end{align*}
Therefore the algorithm terminates in a finite number of steps.

Finally we will be finished when we validate the claim that the sequence of curves $\{\Gamma_i\}_{i=1}^N$ generated in this way satisfy the desired properties.  That one has the decomposition of associated measures asserted in $1)$ holds by construction.  In particular, each time one performs surgery the result is two curves, the sum of whose associated measures is equal to the associated measure of the initial curve, and therefore by finite induction one obtains the decomposition of the initial associated measure in terms of the sequence of associated measures.  That one has the length bound claimed in $2)$ we argue as follows.  As above, one observes that the total length generated by Type I cuts can be bounded above by
\begin{align*}
2\epsilon \sum_{i=1}^N\beta_i \leq 2\epsilon\frac{L}{(1-\epsilon)}.
\end{align*}
For Type II cuts, we can bound the additional length added by multiplying the number of such cuts times twice the length of the maximal segments introduced (one factor of $\epsilon^{-1}\delta$ for each line segment introduced by a Type II cut):
\begin{align*}
2\epsilon^{-1}\delta T_2 &\leq 2\epsilon^{-1}\delta  \frac{2\epsilon}{1-2\epsilon}\frac{L}{(1-\epsilon)\delta} \\
&= \frac{4}{1-2\epsilon}\frac{L}{(1-\epsilon)}.
\end{align*}
The choice of $\epsilon =1/10$ leads to the bound for the total lengths
\begin{align*}
\text{Initial + Type I Added Length + Type II Added Length} =L+\frac{2}{9}L+\frac{50}{9}L \leq 10 L.
\end{align*}
Concerning the ball growth constants of the measures associated to this family of curves:  From the pieces produced by Type I cuts we find a constant of $73 \epsilon^{-1}$; from the pieces produced by Type II cuts we find a constant of $25 \epsilon^{-1}$; the terminal piece satisfies the condition of Lemma \ref{full-ball-lem} and therefore has a constant of $72\epsilon^{-1}$.  The choice $\epsilon =1/10$ yields the claimed bound of $1000$.  This completes the proof.
\end{proof}

We next prove Theorem \ref{approximation}, which is an almost immediate consequence of Bourgain and Brezis's observation concerning Smirnov's result \cite{Smirnov} and Lemma \ref{surgery-lem}.
\begin{proof}[Proof of Theorem \ref{approximation}]
We begin with a variant of an assertion of Bourgain and Brezis, that Smirnov's integral decomposition of divergence free measures \cite[Theorem A]{Smirnov} implies \eqref{weak-star-convergence} and \eqref{strict-convergence}, which we recall here for the convenience of the reader: there exist oriented $C^1$ closed curves $\{\Gamma_{i,l}\}_{\{1,\ldots,n_l\}\times \mathbb{N}}$
such that
\begin{align*}
F= \lim_{l \to \infty} \frac{||F||(\mathbb{R}^d)}{n_l \cdot l} \sum_{i=1}^{n_l}   \mu_{\Gamma_{i,l}}
\end{align*}
weakly-star as measures and
\begin{align*}
\lim_{l \to \infty} \frac{1}{n_l \cdot l} \sum_{i=1}^{n_l} || \mu_{\Gamma_{i,l}}||(\mathbb{R}^d) =1.
\end{align*}
An application of Lemma \ref{surgery-lem} to each $\{\Gamma_{i,l}\}$ produces $k_i:=N$ curves $\{\Gamma_{i,l,j}\}_{j=1}^{k_{i}}$ which satisfy the assertions of Lemma \ref{surgery-lem}.  Combined with the preceding convergences we find
\begin{align*}
F= \lim_{l \to \infty} \frac{||F||(\mathbb{R}^d)}{n_l \cdot l} \sum_{i=1}^{n_l}  \sum_{j=1}^{k_{i}}  \mu_{\Gamma_{i,l,j}}\end{align*}
weakly-star as measures,
\begin{align*}
\lim_{l \to \infty} \frac{1}{n_l \cdot l} \sum_{i=1}^{n_l} \sum_{j=1}^{k_i}  ||\mu_{\Gamma_{i,l,j}}||(\mathbb{R}^d) \leq \lim_{l \to \infty} \frac{1}{n_l \cdot l} \sum_{i=1}^{n_l}  10 ||\mu_{\Gamma_{i,l}}||(\mathbb{R}^d) \leq 10,
\end{align*}
and
\begin{align*}
\|\mu_{\Gamma_{i,l,j}}\|_{\mathcal{M}^1(\mathbb{R}^d)}:= \sup_{x\in \mathbb{R}^d,r>0} \frac{||\mu_{\Gamma_{i,l,j}}||(B(x,r))}{r} \leq 1000.
\end{align*}
This completes the proof of the theorem.
\end{proof}

\section{Fractional Integration and Elliptic Systems}\label{mainresults}
We begin with the proof of Theorem \ref{mainresult}.

\begin{proof}[Proof of the Theorem \ref{mainresult}]
First let us observe that it suffices to prove the inequality for $\alpha \in (0,1)$, since once the theorem has been proved for any given value of $\alpha>0$, we may obtain the result for general $\alpha' \in (\alpha,d)$, by the semi-group property of the Riesz potentials and O'Neil's result concerning convolution in Lorentz spaces, i.e.
\begin{align*}
\left \|I_{\alpha'}  F \right\|_{L^{d/(d-\alpha'),1}(\mathbb{R}^d;\mathbb{R}^d)} &= \left \|I_{\alpha'-\alpha}  I_\alpha F \right\|_{L^{d/(d-\alpha'),1}(\mathbb{R}^d;\mathbb{R}^d)} \\
&\leq C\left \|I_\alpha F \right\|_{L^{d/(d-\alpha),1}(\mathbb{R}^d;\mathbb{R}^d)}.
\end{align*}
We therefore proceed to treat the case $\alpha \in (0,1)$.

Next, we claim that Theorem \ref{approximation} implies that it is sufficient to prove the inequality
\begin{align}\label{sufficient}
\left \|I_\alpha  \mu_{\Gamma} \right\|_{L^{d/(d-\alpha),1}(\mathbb{R}^d;\mathbb{R}^d)} \leq C ||\mu_{\Gamma}||(\mathbb{R}^d)
\end{align}
for every oriented piecewise $C^1$ closed curve $\Gamma\subset \mathbb{R}^d$ whose associated measure $\mu_\Gamma$ admits the bound
\begin{align*}
\sup_{x\in \mathbb{R}^d, r>0} \frac{||\mu_{\Gamma}||(B_r(x))}{r} \leq 1000.
\end{align*}

Indeed, supposing we have established \eqref{sufficient} for such curves, for general $F \in M_b(\mathbb{R}^d;\mathbb{R}^d)$ such that $\operatorname*{div}F=0$ we apply Theorem \ref{approximation} to find oriented piecewise $C^1$ closed curves $\Gamma_{i,l,j}$ for which
\begin{align*}
F= \lim_{l \to \infty} \frac{||F||(\mathbb{R}^d)}{n_l \cdot l} \sum_{i=1}^{n_l}  \sum_{j=1}^{k_{i}}  \mu_{\Gamma_{i,l,j}}\end{align*}
weakly-star as measures,
\begin{align*}
\lim_{l \to \infty} \frac{1}{n_l \cdot l} \sum_{i=1}^{n_l} \sum_{j=1}^{k_i} || \mu_{\Gamma_{i,l,j}}||(\mathbb{R}^d)  \leq \lim_{l \to \infty} \frac{1}{n_l \cdot l} \sum_{i=1}^{n_l}  10 || \mu_{\Gamma_{i,l,j}}||(\mathbb{R}^d)  \leq 10,
\end{align*}
and whose associated measures $\mu_{\Gamma_{i,j,l}}$ satisfy the ball growth condition with uniform constant $1000$.

Then for $\Phi \in C_c(\mathbb{R}^d;\mathbb{R}^d)$, $ \|\Phi\|_{L^{d/\alpha,\infty}(\mathbb{R}^d;\mathbb{R}^d)}  \leq 1$, by Fubini's theorem, the preceding convergences, H\"older's inequality in the Lorentz spaces, and the claimed inequality \eqref{sufficient}, we obtain the chain of inequalities
\begin{align*}
\int_{\mathbb{R}^d}  I_\alpha F \cdot \Phi \;dx &= \lim_{l \to \infty} \frac{||F||(\mathbb{R}^d)}{n_l \cdot l} \sum_{i=1}^{n_l}  \sum_{j=1}^{k_i} \int_{\mathbb{R}^d} \Phi \cdot I_\alpha \mu_{\Gamma_{i,l,j}} \\
&\leq \liminf_{l \to \infty} \frac{||F||(\mathbb{R}^d)}{n_l \cdot l} \sum_{i=1}^{n_l} \sum_{j=1}^{k_i}  e^{1/e} \frac{d}{\alpha}\left\|I_\alpha \mu_{\Gamma_{i,l,j}} \right\|_{L^{d/(d-\alpha),1}(\mathbb{R}^d;\mathbb{R}^d)}  \\
&\leq \liminf_{l \to \infty} \frac{||F||(\mathbb{R}^d)}{n_l \cdot l} \sum_{i=1}^{n_l} 10Ce^{1/e} \frac{d}{\alpha} ||\mu_{\Gamma_{i,l,j}}||(\mathbb{R}^d) \\
&\leq 10Ce^{1/e} \frac{d}{\alpha}||F||(\mathbb{R}^d).
\end{align*}
In particular this yields the inequality
\begin{align*}
\int_{\mathbb{R}^d}  I_\alpha F \cdot \Phi \;dx \leq 10Ce^{1/e} \frac{d}{\alpha}||F||(\mathbb{R}^d),
\end{align*}
from which the desired inequality follows by taking the supremum over such $\Phi$ (as the norm of $I_\alpha F$ in $L^{d/(d-\alpha),1}(\mathbb{R}^d;\mathbb{R}^d)$ is realized as the supremum over such pairings by the Hahn-Banach theorem, which utilizes the fact that $L^{d/(d-\alpha),1}(\mathbb{R}^d;\mathbb{R}^d)$ is a Banach space).

Therefore we proceed to prove the inequality \eqref{sufficient}, which by dilation it suffices to prove for $||\mu_{\Gamma}||(\mathbb{R}^d)
=1$.  Let us prove the estimate for the equivalent (see Section \ref{preliminaries}) quasi-norm
\begin{align*}
\left|\left|\left|I_\alpha  \mu_\Gamma \right|\right|\right|_{L^{d/(d-\alpha),1}(\mathbb{R}^d;\mathbb{R}^d)} &= \int_0^\infty |\{ |I_\alpha  \mu_\Gamma | >s\}|^{(d-\alpha)/d} \;ds \\
&=\int_0^1 |\{ |I_\alpha  \mu_\Gamma | >s\}|^{(d-\alpha)/d} \;ds + \int_1^\infty |\{ |I_\alpha  \mu_\Gamma | >s\}|^{(d-\alpha)/d} \;ds \\
&=: I+II.
\end{align*}
For $I$ we use the lower level set estimate from Lemma \ref{pointwise_global} (which requires $\alpha \in (0,1)$), Young's inequality, and the fact that
\begin{align*}
&\left\{   \sup_{t>0} \left|p_t \ast \mu_\Gamma \right|+ \sup_{t>0}  t^{1/2}  |\nabla p_t| \ast ||S|| >\frac{s}{C_1}\right\} \\
 &\;\;\;\;\subset \{ \sup_{t>0} \left|p_t \ast \mu_\Gamma \right| >\frac{s}{2C_1}\} \cup\{ \sup_{t>0}  t^{1/2}  |\nabla p_t| \ast ||S|| >\frac{s}{2C_1}\}
\end{align*}
 to obtain
\begin{align*}
I &\leq  \int_0^1 \left(\left|\left\{   \sup_{t>0} \left|p_t \ast \mu_\Gamma \right| >\frac{s}{2C_1}\right\}\right| + \left|\left\{ \sup_{t>0}  t^{1/2}  |\nabla p_t| \ast ||S|| >\frac{s}{2C_1}\right\}\right|\right)^{(d-\alpha)/d} \;ds.
\end{align*}
Next, the weak-type estimate for the maximal functions
\begin{align*}
\nu &\mapsto  \mathcal{M}_1(\nu):=\sup_{t>0} |p_t \ast \nu |, \\
\nu &\mapsto  \mathcal{M}_2(\nu):= \sup_{t>0}  t^{1/2}  |\nabla p_t| \ast \nu,
\end{align*}
which follows from the weak-type estimate for the Hardy-Littlewood maximal function and the upper bound given by \cite[Theorem 2 on p.~62]{Stein}, implies
\begin{align*}
I &\leq  \int_0^1 \left(\frac{C_1'}{s}||\mu_\Gamma||(\mathbb{R}^d) + \frac{C_1''}{s} ||S||(\mathbb{R}^d)\right)^{(d-\alpha)/d} \;ds.
\end{align*}
Finally, the general isoperimetric inequality $||S||(\mathbb{R}^d)^{1/2} \leq c||\mu_\Gamma||(\mathbb{R}^d) $ from \cite[4.2.10 on p.~408]{Federer} and the fact that $||\mu_\Gamma||(\mathbb{R}^d) =1$ yields
\begin{align*}
I &\leq \int_0^1 \left(\frac{C_1'}{s} + \frac{c^2C_1''}{s} \right)^{(d-\alpha)/d} \;ds\\
&=: C_5<+\infty.
\end{align*}
since $(d-\alpha)/d<1$.

For $II$ we use Lemma \ref{interpolation1} and the fact that $\Gamma$ satisfies the ball growth condition with constant $1000$ to obtain
\begin{align*}
|I_\alpha  \mu_\Gamma | \leq C_2 \sup_{t>0} \left|p_t \ast \mu_\Gamma \right|^{1-\frac{\alpha}{d-1}} C_3^\frac{\alpha}{d-1}
\end{align*}
This upper bound implies
\begin{align*}
II &\leq \int_1^\infty |\{ C_2 \sup_{t>0} \left|p_t \ast \mu_\Gamma  \right|^{1-\frac{\alpha}{d-1}} C_3^\frac{\alpha}{d-1} >s\}|^{(d-\alpha)/d} \;ds,
\end{align*}
so that again by the weak-type estimate for the maximal function $\mathcal{M}_1$ we deduce the upper bound
\begin{align*}
II &\leq \int_1^\infty \left(\frac{C_2'}{s^{(d-1)/(d-1-\alpha)}}||\mu_\Gamma||(\mathbb{R}^d) \right)^{(d-\alpha)/d}\;ds\\
&=\int_1^\infty \left(\frac{C_2'}{s^{(d-1)/(d-1-\alpha)}}\right)^{(d-\alpha)/d}\;ds\\
&=:C_6<+\infty
\end{align*}
since
\begin{align*}
\frac{d-1}{d-1-\alpha} \times \frac{d-\alpha}{d}>1.
\end{align*}
Thus we have proved that for a curve of length $1$ one has the estimate
\begin{align*}
\|I_\alpha  \mu_\Gamma  \|_{L^{d/(d-\alpha),1}(\mathbb{R}^d;\mathbb{R}^d)} \leq C_5+C_6,
\end{align*}
which was the claim.  This completes the proof.
\end{proof}

We next prove Theorem \ref{bbq}.

\begin{proof}[Proof of Theorem \ref{bbq}]
It is a simple calculation to show that
\begin{align*}
Z= \operatorname*{curl} (-\Delta)^{-1}F \equiv I_2F
\end{align*}
satisfies the equations and so it only remains to prove the estimate.  However, if we denote by $R_i$ the $i$th Riesz transform
\begin{align*}
R_if:= \frac{\partial }{\partial x_i} I_1 f
\end{align*}
then we may express
\begin{align*}
Z = \langle R_2(I_1F_3)-R_3(I_1F_2),R_3(I_1F_1)-R_1(I_1F_3),R_1(I_1F_2)-R_2(I_1F_1)\rangle.
\end{align*}
In particular, the boundedness of $R_i:L^p(\mathbb{R}^3) \to L^p(\mathbb{R}^3)$ for $1<p<+\infty$ implies, by interpolation, its boundedness on the Lorentz spaces $L^{p,q}(\mathbb{R}^3)$ for $1<p<+\infty$ and $1\leq q \leq +\infty$, and therefore
\begin{align*}
\|Z\|_{L^{3/2,1}(\mathbb{R}^3;\mathbb{R}^3)} \leq C\|I_1 F\|_{L^{3/2,1}(\mathbb{R}^3;\mathbb{R}^3)}.
\end{align*}
But then this inequality and an application of Theorem \ref{mainresult} completes the proof of the claimed Lorentz scale inequality.  The fact that one has
\begin{align*}
\frac{Z(x)}{|x-y|} \in L^1(\mathbb{R}^3;\mathbb{R}^3)
\end{align*}
for every $y \in \mathbb{R}^3$ then follows by H\"older's inequality on the Lorentz scale, as $Z \in L^{3/2,1}(\mathbb{R}^3;\mathbb{R}^3)$ and $|x-y|^{-1} \in L^{3,\infty}(\mathbb{R}^3)$, see e.g. \cite[Theorem 3.5]{oneil} for a statement and \cite[Section 2]{MS} for a proof.
\end{proof}

We conclude this section with a proof of Theorem \ref{bbq1}.

\begin{proof}[Proof of Theorem \ref{bbq1}]
As the solution of the vector Poisson equation is given by
\begin{align*}
U=I_2 F,
\end{align*}
the inequality
\begin{align*}
\| U\|_{L^{d/(d-2),1}(\mathbb{R}^d;\mathbb{R}^d)} &\leq  C\| F\|_{L^{1}(\mathbb{R}^d;\mathbb{R}^d)}
\end{align*}
follows immediately from Theorem \ref{mainresult}.  Meanwhile, similar to the proof of Theorem \ref{bbq}, we may express the gradient of $U$ as the vector Riesz transform of the vector-field $F$,
\begin{align*}
\nabla U = R I_1F.
\end{align*}
Therefore, by the boundedness of the Riesz transforms on the Lorentz spaces and an application of Theorem \ref{mainresult}, we find
\begin{align*}
\| \nabla U\|_{L^{d/(d-1),1}(\mathbb{R}^d;\mathbb{R}^{d\times d})} & \leq C\| I_1 F\|_{L^{d/(d-1),1}(\mathbb{R}^d;\mathbb{R}^d)} \\
&\leq  C'\| F\|_{L^{1}(\mathbb{R}^d;\mathbb{R}^d)},
\end{align*}
which completes the proof.
\end{proof}

\section*{Acknowledgements}
The first named author is supported by the Fannie and John Hertz Foundation.  The second named author is supported by the Taiwan Ministry of Science and Technology under research grant number 110-2115-M-003-020-MY3 and the Taiwan Ministry of Education under the Yushan Fellow Program.  The impetus for this paper was a question asked by Haim Brezis after a lecture given by the second named author at Rutgers University.   The second named author would like to warmly thank Haim Brezis and Rutgers University for the invitation and hospitality, while both authors would like to thank Haim Brezis for comments on the exposition in this manuscript.  Additionally, the authors would like to thank Ulrich Menne for discussions regarding minimal surfaces and maximal functions, as well as Jean Van Schaftingen and Jesse Goodman for discussions regarding Smirnov's approximation.  Needless to say the remaining shortcomings are our own.


\bibliographystyle{amsplain}


\end{document}